\theoremstyle{plain}
\newtheorem{thm}{Theorem}[section]
\newtheorem{lem}[thm]{Lemma}
\newtheorem{cor}[thm]{Corollary}
\newtheorem{prop}[thm]{Proposition}
\theoremstyle{definition}
\newtheorem{defn}[thm]{Definition}
\newtheorem{exam}[thm]{Example}
\theoremstyle{remark}
\newtheorem{rem}[thm]{Remark}
\newcommand{\iso}{\cong}
\newcommand{\ZZ}{\mathbb{Z}}
\newcommand{\RR}{\mathbb{R}}
\newcommand{\weyl}{\Delta}
\newcommand{\dweyl}{\nabla}
\newcommand{\ident}{{\rm id}}
\newcommand{\cat}{\mathcal{A}}
\newcommand{\catdash}{\mathcal{B}}
\newcommand{\Lder}{\mathbf{L}}
\newcommand{\Rder}{\mathbf{R}}
\newcommand{\weakequiv}{\mathcal{W}}
\newcommand{\cofibr}{\mathcal{C}}
\newcommand{\fibr}{\mathcal{F}}
\newcommand{\modcat}[1]{{#1}{\rm -mod}}
\newcommand{\grmodcat}[1]{{}_{\rm gr}{#1}{\rm -mod}}
\newcommand{\filtmod}[1]{{}_F{#1}{\rm -mod}}
\newcommand{\radfiltmod}[1]{{}_J{#1}{\rm -mod}}
\newcommand{\injmap}[1]{{#1}{\rm -inj}}
\newcommand{\projmap}[1]{{#1}{\rm -proj}}
\newcommand{\cofmap}[1]{{#1}{\rm -cof}}
\newcommand{\fibmap}[1]{{#1}{\rm -fib}}
\newcommand{\derivcat}[1]{D({#1})}
\newcommand{\Homfilt}{\Hom_{F}}
\newcommand{\Homradfilt}{\Hom_{J}}
\newcommand{\Homgr}{\Hom_{\rm gr}}
\newcommand{\Extfilt}{\Ext_{F}}
\newcommand{\Extradfilt}{\Ext_{J}}
\newcommand{\Extgr}{\Ext_{\rm gr}}
\newcommand{\fl}{\mathrm{fl}}
\DeclareMathOperator{\Ext}{Ext}
\DeclareMathOperator{\Hom}{Hom}
\DeclareMathOperator{\End}{End}
\DeclareMathOperator{\soc}{soc}
\DeclareMathOperator{\rad}{rad}
\DeclareMathOperator{\head}{head}
\DeclareMathOperator{\chmod}{{\bf Ch}}
\DeclareMathOperator{\grees}{Rees}
\DeclareMathOperator{\im}{im}
\DeclareMathOperator{\Mor}{Mor}
\DeclareMathOperator{\Ho}{Ho}
\DeclareMathOperator{\pr}{pr}
\begin{document}

\title[Rigidity of tilting modules]{Radically filtered quasi-hereditary algebras and rigidity of tilting modules}
\author{Amit Hazi}
\date{29 April 2015}
\address{Department of Pure Mathematics and Mathematical Statistics\\
Centre for Mathematical Sciences\\
University of Cambridge\\
Wilberforce Road\\
Cambridge\\
CB3 0WB \\
United Kingdom}
\email{A.Hazi@dpmms.cam.ac.uk}
\subjclass[2010]{16D70 (Primary); 16D90, 20G40 (Secondary)}

\begin{abstract}
Let $A$ be a quasi-hereditary algebra. We prove that in many cases, a tilting module is rigid (i.e.~has identical radical and socle series) if it does not have certain subquotients whose composition factors extend more than one layer in the radical series or the socle series. We apply this theorem to give new results about the radical series of some tilting modules for $SL_4(K)$, where $K$ is a field of positive characteristic.
\end{abstract}

\maketitle

\tableofcontents

\section*{Introduction}

Let $A$ be a finite-dimensional quasi-hereditary algebra, with standard modules $\weyl(\lambda)$ and costandard modules $\dweyl(\lambda)$. The tilting modules for $A$ were first characterized by Ringel in \cite{ringel} as modules with both standard and costandard filtrations. The goal of this paper is to describe when tilting modules are rigid (i.e.~have identical radical and socle series). The paper can be split roughly into two parts. In the first part, we describe filtered algebras and the machinery for working with them in a derived setting. In the second part, we use this machinery to prove our rigidity results, which we apply to calculating the Loewy structure of some tilting modules.

Our work was partially inspired by the work of Bowman, Doty, and Martin \cite{bdm-small,bdm-large} which described the indecomposable summands of the tensor product $L \otimes L'$ of two irreducible $SL_3(K)$ modules, where $K$ is a field of positive characteristic. For a general reductive algebraic group $G$, the category of rational $G$-modules is a highest-weight category, which is closely related to the notion of a quasi-hereditary algebra \cite{donkin}. This means that tilting modules can be defined for algebraic groups using Ringel's classification. In particular, tilting modules for algebraic groups naturally appear as some of the indecomposable summands of $L \otimes L'$. 

With few exceptions, the tilting modules in \cite{bdm-small,bdm-large} (and in a previous paper \cite{doty-henke} on the $SL_2(K)$ case) are all rigid. Andersen and Kaneda showed why this is the case by proving a rigidity result for tilting modules for quantum groups and algebraic groups in positive characteristic \cite{andersen-kaneda}. They showed that tilting modules above the Steinberg weight which are not ``too close'' to the walls of the dominant chamber or ``too high'' in the case of algebraic groups are rigid.

We had hoped to use our rigidity result as a stepping-stone for similar tensor decomposition work for $SL_4(K)$. In the last section, we do succeed in showing that the restricted tilting modules are rigid and calculate their Loewy structures (a new result as far as we are aware). The calculations rely heavily on knowledge of the Weyl module structures, which can be difficult to compute in general. Further work in this direction seems necessary for this method to be extended to higher weight tilting modules.

\section{Filtered algebras}

Throughout this paper, $A$ denotes a finite-dimensional algebra over a field $K$.

\begin{defn}
A generalized filtration on $A$ is a collection of $K$-subspaces $F^i A$ (indexed by integers $i$) such that the $K$-linear span of $\{F^i A\}$ is $A$, $1 \in F^0 A$, and $(F^i A)(F^j A) \subseteq F^{i+j} A$ for all $i,j$.
\end{defn}

This is similar to the notion of an ascending or descending filtration on $A$, but without the containment condition. If $A$ has a generalized filtration $F^\bullet$ we call $A$ a generalized filtered algebra. In this paper we will often omit ``generalized'' for brevity.

\begin{defn}
\hfill
\begin{itemize}
\item A filtered module over a filtered algebra $A$ is an $A$-module $M$ equipped with a collection of $K$-subspaces $F^i M$ indexed over the integers such that the $K$-linear span of $\{F^i M\}$ is $M$ and $(F^i A)(F^j M) \subseteq F^{i+j} M$ for all $i,j$.

\item A homomorphism between filtered $A$-modules $M$ and $M'$ with filtrations $F^\bullet$ and $F^{'\bullet}$ is an $A$-module homomorphism $f:M \rightarrow M'$ such that $f(F^i M) \subseteq F^{'i} M'$ for all $i$.
\end{itemize}
\end{defn}

If $M$ is a filtered $A$-module and $M' \leq M$ is an $A$-module, then there are natural filtrations on $M'$ and $M/M'$ making them into filtered modules, namely $F^i M'=F^i M \cap M'$ and $F^i(M/M')=(F^i M+M')/M'$. Combining these two constructions, we can give any subquotient $M'/M''$ of $M$ the filtration
\begin{equation*}
F^i(M'/M'')=(F^i M \cap M'+M'')/M''
\end{equation*}
by first considering $M'$ as a submodule of $M$ and then considering $M'/M''$ as a quotient of $M'$. This is well-defined, for if we apply these processes in the opposite order, we get 
\begin{align*}
F^i(M/M'')& =(F^i M+M'')/M'' \\
F^i(M'/M'')& =((F^i M+M'') \cap M')/M'' \\
& =(F^i M \cap M'+M'')/M''
\end{align*}
which gives the same filtration.

We write $\filtmod{A}$ for the category of filtered modules over a filtered algebra $A$. This category is always additive and in fact pre-abelian, yet even in the case of ascending/descending filtrations, $\filtmod{A}$ is not necessarily abelian.

\begin{exam}
\label{exam:rad-filt}
Let $J(A)$ be the Jacobson radical of $A$, and define the filtration $J^i A=J(A)^i$ for $i \geq 0$ and $J^i A=A$ for $i<0$. This gives $A$ a (descending) filtered structure, and any $A$-module $M$ can be given a filtration $J^i M=J(A)^i M=\rad^i M$ (and $J^i M=M$ for $i<0$) which is compatible with the filtration on $A$. In this case, we write $\radfiltmod{A}$ for the filtered module category.
\end{exam}

\section{Model categories}

In order to define a functor analogous to $\Ext$ on $\filtmod{A}$ it will be necessary to use some technology from homotopy theory, which we describe below. The primary reference for this section is \cite[Chapter 1]{hovey}. Throughout this section, $\cat$ and $\catdash$ denote arbitrary categories.

\subsection{Model structures}

\begin{defn}
Suppose $i:U \rightarrow V$ and $p:X \rightarrow Y$ are maps in a category $\cat$. Then $i$ has the left lifting property with respect to $p$ and $p$ has the right lifting property with respect to $i$ if for every commutative diagram of the following form
\begin{equation*}
\xymatrix{
U \ar[d]_{i} \ar[r]^{f} & X \ar[d]^{p} \\
V \ar[r]_{g} & Y
}
\end{equation*}
there exists a map $h:V \rightarrow X$ such that two triangles introduced in the above diagram commute, i.e.~$hi=f$ and $ph=g$.
\end{defn}

In this situation we write $i \boxslash p$. A map $h$ fitting into such a commutative square is called a lift.

\begin{defn}
\label{defn:modcat}
A model structure on a category $\cat$ is a collection of three subclasses $\weakequiv,\cofibr,\fibr$ of $\Mor \cat$ which satisfy the following properties:
\begin{enumerate}[label=(\roman*)]
\item (2-out-of-3) Suppose $u,v \in \Mor \cat$ such that $vu$ is defined. If two of $u$, $v$, and $vu$ are in $\weakequiv$ then so is the third.

\item (Retracts) Given a commutative diagram of the following form
\begin{equation*}
\xymatrix{
U \ar[d]_{u} \ar[r] \ar@/^/[rr]^{\ident} & C \ar[d]_{v} \ar[r] & A \ar[d]^{u} \\
V \ar[r] \ar@/_/[rr]_{\ident} & D \ar[r] & B
}
\end{equation*}
if $v$ is in $\weakequiv$, $\cofibr$, or $\fibr$ then so is $u$.

\item (Lifting) Using the obvious setwise extension of the symbol $\boxslash$, we have $(\weakequiv \cap \cofibr) \boxslash \fibr$ and $\cofibr \boxslash (\weakequiv \cap \fibr)$.

\item (Factorization) For every $f \in \Mor \cat$, there exist two factorizations:
\begin{itemize}
\item $f=pi$ where $i \in \weakequiv \cap \cofibr$ and $p \in \fibr$,

\item $f=qj$ where $j \in \cofibr$ and $q \in \weakequiv \cap \fibr$.
\end{itemize}
\end{enumerate}
\end{defn}

A map in one of $\weakequiv$, $\cofibr$, or $\fibr$ is called a weak equivalence, cofibration, or fibration respectively. A map in $\weakequiv \cap \cofibr$ or $\weakequiv \cap \fibr$ is called a trivial cofibration or a trivial fibration respectively. In categories with initial and terminal objects (denoted $0$ and $1$ respectively), an object $X$ of $\cat$ is called cofibrant if $0 \rightarrow X$ is a cofibration or fibrant if $X \rightarrow 1$ is a fibration.

Sometimes a distinction is made between a ``category with model structure'' and a so-called ``model category.'' A model category is simply a category with a model structure which contains all finite limits and colimits. A closed model category is a model category which additionally contains all small limits and colimits. Since the categories we will be using later have all such limits, we will freely use the phrase ``model category'' instead of ``category with model structure.''

\subsection{Homotopy categories and derived functors}

The primary motivation for model structures is the homotopy category (sometimes also called the derived category). The homotopy category of a model category is a generalization of the classical derived category $\derivcat{\modcat{A}}$ obtained from the category of cochain complexes $\chmod(\modcat{A})$. Namely, the homotopy category is obtained by adding the inverses of certain ``equivalences'' to the original category. One can think of model categories as categories with just enough structure to enable calculations in homotopy categories.

\begin{defn}
Let $\cat$ be a category with a model structure given by $\weakequiv,\cofibr,\fibr$. The homotopy category (or derived category) of $\cat$ is a category $\Ho \cat$ and a functor $\gamma_\cat:\cat \rightarrow \Ho \cat$ which is the localization of $\cat$ at $\weakequiv$. 

In other words, $\gamma_\cat$ maps $\weakequiv$ to isomorphisms, and $\Ho \cat$ is universal with this property in the sense that if another functor $F:\cat \rightarrow \catdash$ maps $\weakequiv$ to isomorphisms, there exists a unique factorization $F=(\Ho F)\gamma_\cat$ for some functor $\Ho F:\Ho \cat \rightarrow \catdash$.
\end{defn}

\begin{defn}
Let $F:\cat \rightarrow \catdash$ be a functor between two model categories. The left derived functor of $F$ is a functor $\Lder F:\Ho \cat \rightarrow \Ho \catdash$ with a natural transformation $\varepsilon:(\Lder F)\gamma_\cat \Rightarrow \gamma_\catdash F$ called the counit which is universal in the following sense: for any other functor $G:\Ho \cat \rightarrow \Ho \catdash$ with a natural transformation $\zeta:G\gamma_\cat \Rightarrow \gamma_\catdash F$, there is a unique $\lambda:G \Rightarrow \Lder F$ such that $\zeta=\varepsilon \circ \lambda\gamma_{\cat}$.
\begin{equation*}
\xymatrix{
\cat \ar[d]_{\gamma_\cat} \ar[r]^{F} & \catdash \ar[d]^{\gamma_\catdash} & \ar@{}[d]|{=} & \cat \ar[d]_{\gamma_\cat} \ar[r]^{F} \ar[ddl]_{\gamma_\cat} & \catdash \ar[d]^{\gamma_\catdash} \\
\Ho \cat \ar[r]_{G} \ar@{=>}[ur]_{\zeta} & \Ho \catdash & & \Ho \cat \ar[r]_{\Lder F} \ar@{=>}[ur]_{\varepsilon} & \Ho \catdash \\
& & \Ho \cat \ar[urr]_{G} \ar@{==>}[ur]|{\lambda\gamma_{\cat}} & &
}
\end{equation*}

Similarly, the right derived functor of $F$ is a functor $\Rder F:\Ho \cat \rightarrow \Ho \catdash$ with a natural transformation $\eta:\gamma_\catdash F \Rightarrow (\Rder F)\gamma_\cat$ called the unit which has the following universal property: for any other functor $G:\Ho \cat \rightarrow \Ho \catdash$ with a natural transformation $\theta:\gamma_\catdash F \Rightarrow G\gamma_\cat$, there is a unique $\mu:\Rder F \Rightarrow G$ such that $\theta=\mu\gamma_{\cat} \circ \eta$.
\begin{equation*}
\xymatrix{
\cat \ar[d]_{\gamma_\cat} \ar[r]^{F} & \catdash \ar[d]^{\gamma_\catdash} \ar@{=>}[dl]_{\theta} & \ar@{}[d]|{=} & \cat \ar[d]_{\gamma_\cat} \ar[r]^{F} \ar[ddl]_{\gamma_\cat} & \catdash \ar[d]^{\gamma_\catdash} \ar@{=>}[dl]_{\eta} \\
\Ho \cat \ar[r]_{G} & \Ho \catdash & & \Ho \cat \ar[r]_{\Rder F} \ar@{==>}[dl]|{\mu\gamma_{\cat}} & \Ho \catdash \\
& & \Ho \cat \ar[urr]_{G}  & &
}
\end{equation*}
\end{defn}

In general, calculating derived functors can be difficult if no extra information about the functor is given. Thus we will restrict ourselves to taking derived functors of functors which preserve some aspects of the model structure.

\begin{defn}
Let $\cat$ and $\catdash$ be two model categories.
\begin{itemize}
\item A left Quillen functor $F:\cat \rightarrow \catdash$ is a functor that is left adjoint and preserves cofibrations and trivial cofibrations.

\item A right Quillen functor $G:\catdash \rightarrow \cat$ is a functor that is right adjoint and preserves fibrations and trivial fibrations.

\item A Quillen adjunction $F \dashv G:\cat \leftrightarrows \catdash$ is an adjunction where $F$ is a left Quillen functor and $G$ is a right Quillen functor.
\end{itemize}
\end{defn}

The following proposition shows that these definitions are overdetermined.

\begin{prop}[\cite{zllow}]
\label{prop:quillfuncttfae}
Let $F \dashv G:\cat \leftrightarrows \catdash$ be an adjunction between two model categories. The following are equivalent.
\begin{enumerate}[label=(\roman*)]
\item $F \dashv G$ is a Quillen adjunction.

\item $F$ preserves cofibrations and trivial cofibrations.

\item $G$ preserves fibrations and trivial fibrations.

\item $F$ preserves cofibrations and $G$ preserves fibrations.
\end{enumerate}
\end{prop}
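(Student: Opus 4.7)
The plan is to reduce everything to the fundamental interaction between an adjunction and the lifting relation $\boxslash$. The key observation is that for any adjunction $F \dashv G : \cat \leftrightarrows \catdash$ and any maps $i \in \Mor \cat$ and $p \in \Mor \catdash$, one has $F(i) \boxslash p$ if and only if $i \boxslash G(p)$. This follows immediately from the naturality of the adjunction bijection $\Hom_\catdash(F(-),-) \iso \Hom_\cat(-,G(-))$: a commutative square built from $F(i)$ and $p$ transposes to a commutative square built from $i$ and $G(p)$, and lifts transpose to lifts under the same correspondence.

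Next I would invoke the standard characterization of distinguished classes in a model category via lifting: a map is a cofibration iff it has the LLP with respect to every trivial fibration, a trivial cofibration iff it has the LLP with respect to every fibration, and dually for (trivial) fibrations. These characterizations follow from the factorization and retract axioms by the classical retract argument, which I would invoke rather than reprove. Combining these characterizations with the adjunction-lifting equivalence from the first paragraph yields the two crucial equivalences:
\begin{align*}
F \text{ preserves cofibrations} &\iff G \text{ preserves trivial fibrations},\\
F \text{ preserves trivial cofibrations} &\iff G \text{ preserves fibrations}.
\end{align*}

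From here the proposition falls out. The implication (i) $\Rightarrow$ (ii) is immediate by definition. For (ii) $\Rightarrow$ (iii), assume $F$ preserves both cofibrations and trivial cofibrations; then by the two equivalences above, $G$ preserves both trivial fibrations and fibrations. The implications (iii) $\Rightarrow$ (i) and (i) $\Rightarrow$ (iv) are similarly trivial. Finally, (iv) $\Rightarrow$ (ii) follows because the hypothesis that $G$ preserves fibrations is, by the second equivalence, exactly the statement that $F$ preserves trivial cofibrations, which when combined with the preservation of cofibrations gives (ii).

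The main obstacle is really just the lifting characterization of cofibrations and trivial cofibrations (the retract argument), since once this is in hand the adjunction swap makes everything formal. I would state this characterization as a preliminary lemma (or cite it from Hovey) rather than prove it inline, so that the body of the argument is a clean symmetric diagram-chase through the four conditions.
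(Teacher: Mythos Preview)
The paper does not supply its own proof of this proposition; it is stated with a citation to \cite{zllow} and left unproved. Your proposal is correct and is exactly the standard argument (as in \cite[Lemma~1.3.4]{hovey}): the adjunction-lifting correspondence $F(i)\boxslash p \Leftrightarrow i\boxslash G(p)$, combined with the retract-argument characterization of (trivial) cofibrations and (trivial) fibrations via lifting, immediately gives the two key equivalences you state, from which the four conditions are formally equivalent. There is nothing to compare against in the paper itself.
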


If $F$ is a Quillen functor, then the derived functor of $F$ can be calculated via a process called (co)fibrant replacement. Suppose a category $\cat$ with model structure has initial and terminal objects $0,1$. For any object $X$, we can factor the map $0 \rightarrow X$ as a map $0 \rightarrow QX \xrightarrow{q_X} X$, where $0 \rightarrow QX$ is a cofibration (and thus $QX$ is cofibrant) and $QX \xrightarrow{q_X} X$ is a trivial fibration. This mapping $X \mapsto QX$ defines a functor\footnote{Functoriality of $Q$ requires that the factorization in Definition \ref{defn:modcat} be functorial. See \cite[1.1.1 (2)]{hovey} for details.} called the cofibrant replacement functor, and $q_X$ defines the components for a natural transformation. Similarly there is a fibrant replacement functor $R$ and a natural trivial cofibration with components $X \xrightarrow{r_X} RX$.

\begin{prop}[\cite{hovey},\cite{zllow}]
If $F:\cat \rightarrow \catdash$ is a left Quillen functor, the left derived functor of $F$ exists, and can be calculated as the following composition:
\begin{equation*}
\xymatrix{
\Ho \cat \ar[r]^{\Ho \gamma_{\cat}Q} & \Ho \cat_c \ar[r]^{\Ho \gamma_{\catdash}F} & \Ho \catdash
}
\end{equation*}
where $\Ho \cat_c$ denotes the full subcategory of cofibrant objects in $\Ho \cat$.
\end{prop}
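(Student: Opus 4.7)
The plan is first to establish the key technical fact — often called Ken Brown's lemma — that a left Quillen functor $F$ preserves weak equivalences between cofibrant objects. Given a weak equivalence $f:X \to Y$ between cofibrant objects, I would use the factorization axiom to decompose the map $(f,\ident_Y):X \sqcup Y \to Y$ as a cofibration $X \sqcup Y \to Z$ followed by a trivial fibration $Z \to Y$. The inclusions $X \to Z$ and $Y \to Z$ (coming from the coproduct) are then cofibrations whose composites with $Z \to Y$ are $f$ and $\ident_Y$ respectively, so by 2-out-of-3 each is a weak equivalence, and hence a trivial cofibration between cofibrant objects. Since $F$ is a left Quillen functor, it preserves trivial cofibrations and trivial fibrations, so $FX \to FZ$, $FY \to FZ$, and $FZ \to FY$ are all weak equivalences. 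A final application of 2-out-of-3 to $Ff = (FZ \to FY) \circ (FX \to FZ)$ gives the claim.

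With Ken Brown's lemma in hand, I define the candidate $\Lder F := \gamma_\catdash \circ F \circ Q$, extended to morphisms via the functoriality of $Q$. To check that this descends to a functor $\Ho \cat \to \Ho \catdash$, observe that any weak equivalence $f:X \to Y$ in $\cat$ sits in a naturality square $q_Y \circ Qf = f \circ q_X$ in which $q_X,q_Y$ are trivial fibrations (in particular weak equivalences), so by 2-out-of-3 the map $Qf$ is a weak equivalence between cofibrant objects. Ken Brown's lemma then forces $FQf$ to be a weak equivalence in $\catdash$, so $\gamma_\catdash FQf$ is an isomorphism, which is exactly what is needed for $\gamma_\catdash FQ$ to factor through the localization $\gamma_\cat$. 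The counit $\varepsilon_X := \gamma_\catdash(Fq_X)$ is then a well-defined natural transformation $(\Lder F)\gamma_\cat \Rightarrow \gamma_\catdash F$.

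To verify the universal property, suppose $G:\Ho \cat \to \Ho \catdash$ is equipped with a natural transformation $\zeta:G\gamma_\cat \Rightarrow \gamma_\catdash F$. Since $q_X$ is a weak equivalence, $\gamma_\cat q_X$ is invertible in $\Ho \cat$, so I would define $\lambda_X := \zeta_{QX} \circ G(\gamma_\cat q_X)^{-1}:GX \to FQX$. Naturality of $\lambda$ and the identity $\zeta = \varepsilon \circ \lambda\gamma_\cat$ follow from naturality of $\zeta$ and $q$, while uniqueness is forced by the requirement that $\zeta_{QX}$ decompose through $\varepsilon_{QX}$ (which is essentially an identity after the cofibrant replacement is applied). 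The real obstacle throughout is Ken Brown's lemma, as it is the one step where the factorization and lifting axioms are used in an essential and non-formal way; once weak equivalences between cofibrant objects are preserved, everything else is a formal consequence of the universal property of the localization $\gamma_\cat$ together with the fact that $q$ is a natural weak equivalence.
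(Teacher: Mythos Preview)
The paper does not give its own proof of this proposition; it simply cites Hovey and another reference, so there is no ``paper's approach'' to compare against. Your outline is essentially the standard argument from those references, via Ken Brown's lemma followed by the formal verification that $\gamma_\catdash F Q$ descends to the homotopy category and satisfies the required universal property.

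There is, however, one genuine slip in your Ken Brown step. You write that ``since $F$ is a left Quillen functor, it preserves trivial cofibrations and trivial fibrations,'' and use this to conclude that $FZ \to FY$ is a weak equivalence. But a left Quillen functor is only required to preserve cofibrations and trivial cofibrations (see Proposition~\ref{prop:quillfuncttfae} in the paper); it need not preserve trivial fibrations. The correct way to see that $FZ \to FY$ is a weak equivalence is the one you almost have in hand: the inclusion $Y \to Z$ is a trivial cofibration, so $FY \to FZ$ is a weak equivalence, and since the composite $FY \to FZ \to FY$ is the identity, 2-out-of-3 forces $FZ \to FY$ to be a weak equivalence as well. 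Once this is fixed, the rest of your argument goes through.
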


For calculating the right derived functor of a right Quillen functor, we use the fibrant replacement functor in a similar way.

Finally Quillen adjunctions have the property that they induce adjunctions in the derived categories, as described below.

\begin{thm}[{\cite[1.3.10]{hovey}}]
If $F\dashv G:\cat \leftrightarrows \catdash$ is a Quillen adjunction, then $\Lder F,\Rder G:\Ho \cat \leftrightarrows \Ho \catdash$ are also adjoint functors. This adjunction is called the derived adjunction of $F \dashv G$.
\end{thm}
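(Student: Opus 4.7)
The plan is to produce a natural bijection $\Hom_{\Ho\catdash}(\Lder F X, Y) \iso \Hom_{\Ho\cat}(X, \Rder G Y)$ by lifting everything to representatives in $\cat$ and $\catdash$ where the original adjunction $F \dashv G$ is available. By the preceding proposition, $\Lder F$ is represented by $F \circ Q$ and $\Rder G$ by $G \circ R$, where $Q$ is the cofibrant replacement functor and $R$ the fibrant one. By Proposition~\ref{prop:quillfuncttfae}, $F$ preserves cofibrations and $G$ preserves fibrations, so $F(QX)$ is cofibrant and $G(RY)$ is fibrant. It therefore suffices to produce a natural bijection between $\Hom_{\Ho\catdash}(F(QX), Y)$ and $\Hom_{\Ho\cat}(X, G(RY))$.

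The standard description of morphism sets in the homotopy category reduces matters further: for a cofibrant object $C$ and an arbitrary object $Y$, there is a natural bijection $\Hom_{\Ho\cat}(C, Y) \iso \cat(C, RY)/\!\sim$, where $\sim$ is the homotopy relation, and dually for morphisms into a fibrant object. Since $q_X$ and $r_Y$ are weak equivalences and become isomorphisms in the homotopy categories, the problem reduces to establishing a natural bijection
\begin{equation*}
\catdash(F(QX), RY)/\!\sim \; \iso \; \cat(QX, G(RY))/\!\sim.
\end{equation*}
The ordinary adjunction $F \dashv G$ already yields a natural bijection $\phi$ of the underlying hom-sets via the usual unit--counit formulas.

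The main obstacle is to show that $\phi$ descends to the quotients by $\sim$. The key input is that a left Quillen functor sends a cylinder object on a cofibrant object to a cylinder object (using that $F$ is a left adjoint, hence preserves colimits, together with its preservation of (trivial) cofibrations), and dually a right Quillen functor sends a path object on a fibrant object to a path object. Consequently, if $f, g \in \catdash(F(QX), RY)$ are right homotopic via a path object $P$ for $RY$, then $G(P)$ is a path object for $G(RY)$, and precomposing $G$ applied to the homotopy with the unit $\eta_{QX}:QX \to GF(QX)$ produces a right homotopy between $\phi(f)$ and $\phi(g)$; the reverse implication is symmetric, and on cofibrant--fibrant pairs left and right homotopy coincide, so the descended map is a bijection. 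Naturality in $X$ and $Y$ at the level of homotopy categories is inherited from naturality of $\phi$, functoriality of $Q$, $R$, $F$, $G$, and the fact that the replacement transformations are weak equivalences. I expect the delicate point to be the bookkeeping around cylinder and path objects under adjoint Quillen functors rather than any single difficult calculation.
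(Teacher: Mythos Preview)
Your sketch is correct and is essentially the argument given in the cited reference \cite[1.3.10]{hovey}; the paper itself does not supply an independent proof but simply invokes Hovey's result. The only minor point worth tightening is that $F(QX)$ is cofibrant because $F$, as a left adjoint, preserves the initial object, so $0 \to F(QX)$ is the image of the cofibration $0 \to QX$; similarly for $G(RY)$ being fibrant.
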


\subsection{Some examples}

We will first describe perhaps the most well-known model category, the category of cochain complexes of an abelian category. Let $\cat$ denote the abelian category $\modcat{A}$ for some algebra $A$, and $\chmod{\cat}$ the category of cochain complexes over $\cat$. The first step is describing what projective or injective relative to a class of morphisms means.

\begin{defn}
Let $I$ be a subclass of maps in some category $\cat$.
\begin{itemize}
\item $\injmap{I}=\{f \in \Mor \cat \mid I \boxslash f\}$

\item $\projmap{I}=\{f \in \Mor \cat \mid f \boxslash I\}$

\item $\cofmap{I}=\projmap{(\injmap{I})}$

\item $\fibmap{I}=\injmap{(\projmap{I})}$
\end{itemize}
\end{defn}

\begin{exam}
Define the following complexes $S^n$ and $D^n$ in $\chmod{\cat}$
\begin{align*}
(S^n)^k& =\begin{cases}
A & \text{if $k=n$} \\
0 & \text{otherwise}
\end{cases} & 
(D^n)^k& =\begin{cases}
A & \text{if $k=n,n+1$} \\
0 & \text{otherwise}
\end{cases}
\end{align*}
where all differentials of $S^n$ are $0$, and the only non-trivial differential map of $D^n$ is $d^n:A \xrightarrow{\ident} A$. For each $n \in \ZZ$ we have an injection $S^{n+1} \rightarrow D^n$ given by the identity in (homological) degree $n+1$ and $0$ elsewhere. Let
\begin{align*}
I& =\{S^{n+1} \rightarrow D^n \mid n \in \ZZ\} \\
J& =\{0 \rightarrow D^n \mid n \in \ZZ\} \\
\weakequiv& =\{f:X \rightarrow Y \mid \text{$H^n(f)$ is an isomorphism for all $n \in \ZZ$}\}
\end{align*}
Here $H^n(f)$ denotes the homomorphism on cohomology groups induced by a cochain map. In other words, $\weakequiv$ consists of the set of quasi-isomorphisms in $\chmod{\cat}$.

\begin{thm} \label{thm:chmod-proj-model}
Let $\cofibr=\cofmap{I}$ and $\fibr=\injmap{J}$. Then the sets $\weakequiv,\cofibr,\fibr$ define a model structure called the projective model structure on $\chmod{A}$.
\end{thm}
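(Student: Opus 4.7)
The plan is to verify each of the four axioms of Definition \ref{defn:modcat} for the classes $\weakequiv,\cofibr,\fibr$ as defined. The 2-out-of-3 property for $\weakequiv$ is immediate, since each $H^n$ is a functor and isomorphisms of abelian groups satisfy 2-out-of-3. Retract closure for $\weakequiv$ follows similarly from retract closure of isomorphisms, and retract closure for $\cofibr=\cofmap{I}$ and $\fibr=\injmap{J}$ is automatic, since any class defined by a one-sided lifting property is closed under retracts by a routine diagram chase.

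Before tackling factorization and lifting, I would identify the relevant classes concretely. A chain map out of $D^n$ is determined by an arbitrary element of degree $n$, so $\injmap{J}$ consists exactly of the degree-wise surjections; these are our fibrations. A slightly more involved cocycle/boundary lifting argument against $S^{n+1}\to D^n$ identifies $\injmap{I}$ with the degree-wise surjective quasi-isomorphisms. The key intermediate result to prove is the equality $\injmap{I}=\weakequiv\cap\fibr$ in both directions; the nontrivial inclusion uses the long exact sequence in cohomology associated to the kernel of a fibration that happens to be a quasi-isomorphism.

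The heart of the proof is factorization, which I would establish via Quillen's small object argument applied to each of the sets $I$ and $J$. The sources and targets in $I$ and $J$ are concentrated in at most two consecutive degrees and are finitely presented as $A$-modules, so they are small in $\chmod{\cat}$, and a transfinite iteration of pushouts stabilizes at some regular cardinal. This produces factorizations $f=pi$ with $i\in\cofmap{I}=\cofibr$ and $p\in\injmap{I}$, and $f=qj$ with $j\in\cofmap{J}$ and $q\in\injmap{J}=\fibr$. The first factorization is of the desired form by the identification $\injmap{I}=\weakequiv\cap\fibr$ above. For the second, I must show $j\in\weakequiv\cap\cofibr$: membership in $\cofibr$ follows from checking $J\subseteq\cofmap{I}$ (since $0\to D^n$ clearly has the left lifting property against any degree-wise surjective quasi-iso) together with closure of $\cofmap{I}$ under pushouts and transfinite compositions, while $j\in\weakequiv$ uses contractibility of each $D^n$, so that pushouts along $0\to D^n$ preserve quasi-isomorphism, combined with exactness of filtered colimits to handle the transfinite composition.

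Finally, the lifting axiom splits into two halves: $\cofibr\boxslash(\weakequiv\cap\fibr)=\cofmap{I}\boxslash\injmap{I}$ is immediate by definition. For $(\weakequiv\cap\cofibr)\boxslash\fibr$, I would invoke the standard retract argument: given a trivial cofibration $g$, apply the $J$-factorization to write $g=qj$ with $j\in\cofmap{J}$ and $q\in\weakequiv\cap\fibr$; lift $q$ against $g$ using the first half of the lifting axiom to exhibit $g$ as a retract of $j$; then conclude by the retract closure already established, together with $\cofmap{J}\boxslash\fibr$. The main obstacle is thus the combination of the small object argument with the quasi-isomorphism stability of pushouts along the $J$-maps, both of which rest on smallness of the generators and exactness of filtered colimits in $\chmod{\cat}$.
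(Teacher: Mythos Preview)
Your proposal is correct and is precisely the standard argument: identify $\injmap{J}$ and $\injmap{I}$ concretely, run the small object argument on $I$ and $J$ to obtain the factorizations, and deduce the remaining lifting half by the retract trick. This is exactly the route taken in the references the paper cites (Hovey \S2.3, Di~Natale \S1.2); the paper itself does not give a self-contained argument but simply points to those sources, so there is no alternative approach to compare against.

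One minor comment on rigor: when you assert smallness of the domains and codomains in $I$ and $J$, it is worth being explicit that $A$ is small (indeed compact) in $\modcat{A}$ because $\Hom_A(A,-)$ commutes with filtered colimits, and that this transfers to $S^n$ and $D^n$ in $\chmod{\cat}$ relative to the relevant class of maps. Hovey is careful about exactly which class smallness is needed relative to, and in a fully detailed write-up you should be too; but as a sketch your outline is sound.
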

\begin{proof}
See, for example, \cite[2.3]{hovey} or \cite[1.2]{di-natale}.
\end{proof}

The fibrations in this model structure are the degreewise surjective cochain maps, and all complexes are fibrant. A cofibrant complex $X$ has the property that for each $n$, $X^n$ is a projective $A$-module. For bounded above complexes, the converse is also true, but unbounded cofibrant complexes are trickier to understand. The cofibrations are the degreewise split injective cochain maps with cofibrant cokernels. Throughout this paper we will use the abbreviation $\derivcat{\cat}$ for $\Ho \chmod \cat$.
\end{exam}

Here is another example of how one can extend this model structure to similar-looking categories.

\begin{exam}
Suppose $B$ is a graded $K$-algebra, i.e.~$B=\bigoplus_i B_i$ with $1 \in B_0$ and $B_i B_j \subseteq B_{i+j}$. Let $\catdash=\grmodcat{B}$, the category of graded $B$-modules. The category $\chmod{\catdash}$ of cochain complexes of graded modules has a projective model structure very similar to the one above.

Let $S^n$ and $D^n$ take the obvious gradings from $B$:
\begin{align*}
((S^n)^k)_i& =\begin{cases}
B_i & \text{if $k=n$} \\
0 & \text{otherwise}
\end{cases} & 
((D^n)^k)_i& =\begin{cases}
B_i & \text{if $k=n,n+1$} \\
0 & \text{otherwise}
\end{cases}
\end{align*}
The differentials are all graded homomorphisms as they are all $0$ or $\ident$.

For a graded $B$-module $M$ and $r \in \ZZ$ define the grading shift $M(r)_i=M_{i-r}$. It is easy to see that shifting is functorial on $\catdash$ and $\chmod{\catdash}$.

Now we define
\begin{align*}
I_{\rm gr}& =\{S^{n+1}(r) \rightarrow D^n(r) \mid n,r \in \ZZ\} \\
J_{\rm gr}& =\{0 \rightarrow D^n(r) \mid n,r \in \ZZ\} \\
\weakequiv_{\rm gr}& =\{f:X \rightarrow Y \mid \text{$H^n(f)$ is an isomorphism for all $n,i \in \ZZ$}\}
\end{align*}

\begin{thm}
Let $\cofibr_{\rm gr}=\cofmap{I_{\rm gr}}$ and $\fibr_{\rm gr}=\injmap{J_{\rm gr}}$. Then the sets $\weakequiv_{\rm gr},\cofibr_{\rm gr},\fibr_{\rm gr}$ define a model structure called the projective model structure on $\chmod{\catdash}$.
\end{thm}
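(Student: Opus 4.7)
The plan is to apply the recognition theorem for cofibrantly generated model categories \cite[Theorem 2.1.19]{hovey} with generating cofibrations $I_{\rm gr}$ and generating trivial cofibrations $J_{\rm gr}$, closely mirroring the ungraded argument of Theorem \ref{thm:chmod-proj-model}. The setting is formally identical: $\catdash = \grmodcat{B}$ is a Grothendieck abelian category with enough projectives, and in fact the shifts $\{B(r) : r \in \ZZ\}$ form a set of compact projective generators. Consequently, every step of the ungraded proof can be executed bidegreewise in the pair $(n,r)$ of cohomological and internal degree.

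First I would verify the formal conditions on $\weakequiv_{\rm gr}$: closure under retracts and the 2-out-of-3 property. Both are immediate because $H^n$ is a functor from $\chmod{\catdash}$ to $\catdash$, so it preserves retracts and sends compositions to compositions. Next, the smallness hypothesis of Hovey's theorem holds because filtered colimits in $\chmod{\catdash}$ are exact and computed bidegreewise, so the domains $S^{n+1}(r)$ and $0$ of the generators are $\aleph_0$-small relative to the respective cell complex classes.

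Third, I would identify the lifting classes. Testing against $0 \to D^n(r)$ characterises $\injmap{J_{\rm gr}}$ as the class of cochain maps whose component in each bidegree $(n, r)$ is surjective. Testing additionally against $S^{n+1}(r) \to D^n(r)$ then characterises $\injmap{I_{\rm gr}}$ as those bidegreewise-surjective maps which are also quasi-isomorphisms: given any cycle in $X$ of bidegree $(n+1, r)$ whose image in $Y$ is a boundary, a lift against $S^{n+1}(r) \to D^n(r)$ supplies a boundary in $X$ mapping to it. Finally, each generator $0 \to D^n(r)$ is a trivial cofibration, since $D^n(r)$ is acyclic and is a bounded complex of projective graded $B$-modules; closure of trivial cofibrations under pushout and transfinite composition then gives $J_{\rm gr}$-cell $\subseteq \weakequiv_{\rm gr} \cap \cofmap{I_{\rm gr}}$, and the small object argument supplies the two factorizations required by Definition \ref{defn:modcat}.

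The main obstacle is the identification $\injmap{I_{\rm gr}} = \injmap{J_{\rm gr}} \cap \weakequiv_{\rm gr}$ in Step 3, which is the only place where the internal grading $r$ cannot be elided: the generating family must include \emph{all} shifts $S^{n+1}(r) \to D^n(r)$ so that cycles in every bidegree can be lifted to boundaries. Once this lifting characterisation is in hand, the remainder of the proof is a routine transport of \cite[2.3]{hovey} to the graded setting.
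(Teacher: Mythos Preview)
Your proposal is correct and follows essentially the same approach as the paper, which simply says to adapt the proof of Theorem~\ref{thm:chmod-proj-model} to the graded setting using that $\grmodcat{B}$ is abelian. You have supplied considerably more detail than the paper does, but the strategy---transporting Hovey's cofibrantly generated argument bidegreewise via the shifted generators $S^{n+1}(r)\to D^n(r)$ and $0\to D^n(r)$---is exactly what the paper intends.
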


\begin{proof}
Adapt the proof of Theorem \ref{thm:chmod-proj-model} to the graded case. This is especially easy because $\grmodcat{B}$ is an abelian category like $\modcat{A}$ so kernels, images, cokernels, etc.~all make sense.
\end{proof}

Again the fibrations in this model structure are the homological degreewise surjective cochain maps, and all complexes are fibrant. A bounded above complex $X$ is cofibrant if and only if $X^n$ is projective as a graded $B$-module for all $n$. The cofibrations are the degreewise split injective cochain maps with cofibrant cokernels.
\end{exam}

\section{Filtered cochain complexes}

Suppose $A$ is a filtered algebra, and let $\cat=\filtmod{A}$. Using the examples from the previous section, we define a model structure on $\chmod{\cat}$ following \cite{di-natale}.

\subsection{Model structure}

Define the following filtrations on $S^n$ and $D^n$ defined above:
\begin{align*}
F^i (S^n)^k& =\begin{cases}
F^i A & \text{if $k=n$} \\
0 & \text{otherwise}
\end{cases} & 
F^i (D^n)^k& =\begin{cases}
F^i A & \text{if $k=n,n+1$} \\
0 & \text{otherwise}
\end{cases}
\end{align*}
It is easy to verify that the differentials are all homomorphisms of filtered modules. 

Now for a filtered $A$-module $M$ and $r \in \ZZ$ define the filtration shift $F^i (M\langle r\rangle)=F^{i-r} M$. It is evident that $M\langle r\rangle$ is still a filtered module, and that shifting is functorial on $\cat$ and $\chmod{\cat}$.

In this vein we define
\begin{align*}
I_F& =\{S^{n+1}\langle r\rangle \rightarrow D^n\langle r\rangle \mid n,r \in \ZZ\} \\
J_F& =\{0 \rightarrow D^n\langle r\rangle \mid n,r \in \ZZ\} \\
\weakequiv_F& =\{f:X \rightarrow Y \mid \text{$H^n(F^i f)$ is an isomorphism for all $n,i \in \ZZ$}\}
\end{align*}
In other words, $\weakequiv_F$ consists of the set of filtration-wise quasi-isomorphisms in $\chmod{\cat}$.
\begin{thm}
Let $\cofibr_F=\cofmap{I_F}$ and $\fibr=\injmap{J_F}$. Then the sets $\weakequiv_F,\cofibr_F,\fibr_F$ define a model structure called the projective model structure on $\chmod{\cat}$.
\end{thm}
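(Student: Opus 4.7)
The plan is to imitate the proof of Theorem \ref{thm:chmod-proj-model} (and its graded version) by applying Hovey's recognition theorem for cofibrantly generated model categories, with $I_F$ and $J_F$ as the generating cofibrations and generating trivial cofibrations. The one new feature compared to those earlier cases is that $\cat = \filtmod{A}$ is only pre-abelian, so I would work layer-by-layer whenever possible: almost every verification reduces to a statement about the underlying cochain complexes of $K$-vector spaces $F^i X^\bullet$, which live in an honest abelian category and for which Theorem \ref{thm:chmod-proj-model} is already available.

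First I would check the properties of $\weakequiv_F$. Closure under 2-out-of-3 and retracts is immediate, since both hold layerwise for quasi-isomorphisms of vector-space cochain complexes. Next I would identify the classes $\injmap{J_F}$ and $\injmap{I_F}$ via the usual adjunction arguments: lifting against $0 \to D^n\langle r\rangle$ characterizes $\injmap{J_F}$ as the maps $p:X\to Y$ such that $F^i p^n$ is surjective for every $n,i$, and lifting against $S^{n+1}\langle r\rangle \to D^n\langle r\rangle$ characterizes $\injmap{I_F}$ as those $p$ for which, in addition, the induced map on $F^i$-cocycles is surjective in each degree. Comparing these with the filtration-wise long exact sequence in cohomology then gives the key identification $\injmap{I_F} = \injmap{J_F} \cap \weakequiv_F$, which is exactly the content of the lifting axiom (iii) of Definition \ref{defn:modcat} once we set $\fibr_F = \injmap{J_F}$ and $\cofibr_F = \cofmap{I_F}$.

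The factorization axiom (iv) follows from the small object argument, since each $S^{n+1}\langle r\rangle$ and $D^n\langle r\rangle$ is concentrated in finitely many cohomological degrees and is generated by a single element of fixed filtration degree, hence small with respect to $I_F$-cell and $J_F$-cell complexes. This produces the two required functorial factorizations, and the acyclicity condition $J_F\text{-cell} \subseteq \weakequiv_F \cap \cofmap{I_F}$ follows because each pushout along $0 \to D^n\langle r\rangle$ introduces a filtration-wise direct summand that is contractible in every $F^i$, a property stable under transfinite composition.

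The main obstacle is exactly the non-abelian nature of $\filtmod{A}$: cokernels and images do not in general interact well with the filtration, so one cannot naively cite the abelian proof of Theorem \ref{thm:chmod-proj-model}. The remedy, following \cite{di-natale}, is to systematically test each axiom by applying the $K$-linear functors $F^i(-)$ and reducing to the abelian cochain-complex setting, where the classical projective model structure supplies the needed lifts and factorizations; reassembling these across all $i$ yields the axioms of Definition \ref{defn:modcat} for $\chmod{\cat}$.
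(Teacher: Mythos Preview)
Your proposal is correct and follows essentially the same approach as the paper: the paper's own proof simply refers to \cite[1.3]{di-natale} and says that one adapts the argument for Theorem~\ref{thm:chmod-proj-model} with extra care for filtration degrees, and you have spelled out precisely that adaptation (Hovey's recognition theorem, small object argument, layerwise verification of the weak-equivalence axioms). One minor caution: your final summary paragraph suggesting that lifts and factorizations are obtained by ``reassembling'' layerwise constructions overstates things slightly---a single filtered lift must be produced at once, not one per $F^i$---but your earlier, more careful description via the explicit identification of $\injmap{I_F}$ and $\injmap{J_F}$ and the small object argument already handles this correctly.
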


\begin{proof}
See \cite[1.3]{di-natale} for a full proof in the case when $A$ has the trivial filtration ($F^i A=A$ for $i \geq 0$). This is an adaptation of the proof of Theorem \ref{thm:chmod-proj-model} but with extra care for filtration degrees.  The general proof is essentially identical.
\end{proof}

As expected, the fibrations in this model structure are the (homological and filtration) degreewise surjective cochain maps, and all complexes are fibrant. A bounded below complex $X$ is cofibrant if and only if $X^n$ is projective as a filtered $A$-module for all $n$ (we explain what this means in greater detail in \ref{ss:filtproj}). The cofibrations are the degreewise split injective cochain maps with cofibrant cokernels.

\subsection{The Rees algebra}

Now we consider connections to the algebra
\begin{equation*}
B=\grees{A}=\bigoplus_{i \in \ZZ} (F^i A)t^i
\end{equation*}
which is a subalgebra of $A[t]$. It has a grading induced both by the grading on $A[t]$ and the filtration structure on $A$. Functionally the indeterminate $t$ does nothing but record the grading, so that $at^i$ is distinct from $at^j$ in $\grees{A}$ for any $a \in F^i A \cap F^j A$. Let $\catdash=\grmodcat{B}=\grmodcat{(\grees{A})}$. It is clear that the $\grees$ construction is functorial, i.e.~$\grees:\cat \rightarrow \catdash$ is a functor mapping a filtered module $M$ to the graded $\grees(A)$-module
\begin{equation*}
\grees{M}=\bigoplus_i (F^i M)t^i
\end{equation*}

\begin{thm}
The functor $\grees$ has a left adjoint $\varphi:\catdash \rightarrow \cat$. The module structure on $\varphi(M)$ is the quotient $M/LM$ where $L$ is the two-sided ideal of $\grees{A}$ generated by
\begin{equation*}
\left\{ \sum_i a_i t^i \mathrel{}\middle\vert\mathrel{} a_i \in F^i A,\ \sum_i a_i=0 \right\}
\end{equation*}
The filtration on $\varphi(M)$ is given by defining $F^i M$ to be the image of $M_i$ in this quotient.
\end{thm}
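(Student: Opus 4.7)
The plan is to exploit the fact that the Rees algebra admits $A$ as a canonical quotient, which immediately produces the $A$-module structure on $\varphi(N) = N/LN$ and forces the adjunction. First I would check that the $K$-linear map $\pi : B \to A$ sending $at^i$ to $a$ on each homogeneous component is a surjective ring homomorphism with kernel $L$: multiplicativity follows from $(at^i)(bt^j) = (ab)t^{i+j}$ together with $(F^iA)(F^jA) \subseteq F^{i+j}A$, surjectivity from the spanning hypothesis, and since every element of $B$ is uniquely $\sum a_i t^i$ with $a_i \in F^iA$, the generating set of $L$ already equals $\ker\pi$ as a $K$-subspace, so $L = \ker\pi$ and $B/L \cong A$. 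Therefore $\varphi(N) := N/LN$ inherits an $A$-module structure, and setting $F^i\varphi(N) := \pi_N(N_i)$ (with $\pi_N : N \to N/LN$ the projection) gives a valid filtration: the $F^i\varphi(N)$ span because $N = \bigoplus_i N_i$, and $(F^iA)(F^j\varphi(N)) \subseteq F^{i+j}\varphi(N)$ follows from $(at^i)N_j \subseteq N_{i+j}$. Functoriality of $\varphi$ is automatic, as any graded $B$-linear map preserves both $L$-action and degrees.

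For the adjunction $\varphi \dashv \grees$, I would directly exhibit mutually inverse natural bijections
\begin{equation*}
\Hom_{\cat}(\varphi(N), M) \;\longleftrightarrow\; \Hom_{\catdash}(N, \grees M).
\end{equation*}
Given a graded $B$-linear $g : N \to \grees M$, compose with the ``forget $t$'' map $\grees M \to M$, $\,mt^j \mapsto m$ (which is $B$-linear once $M$ is regarded as a $B$-module through $\pi$) to obtain $\tilde g : N \to M$. Because $\tilde g$ is $B$-linear through $\pi$, it annihilates $LN$ and descends to an $A$-linear $\bar g : \varphi(N) \to M$, which is filtered since $\tilde g(N_i) \subseteq F^iM$. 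Conversely, given a filtered $A$-linear $h : \varphi(N) \to M$, define its transpose on $n \in N_i$ by $h(\pi_N(n))\,t^i \in (F^iM)t^i$ and extend $K$-linearly; this lands in $\grees M$ with the correct grading, and is $B$-linear because both $B$-actions in sight factor through $\pi$. The two constructions are mutually inverse by inspection and natural in both variables.

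The main subtlety lies in the choice of filtration $F^i\varphi(N) = \pi_N(N_i)$: it must be coarse enough for every graded hom $N \to \grees M$ to descend, and fine enough that every filtered hom out of $\varphi(N)$ arises this way. This precise definition is what ensures a filtered hom $\varphi(N) \to M$ carries exactly the same data as the degree-wise restrictions $N_i \to F^iM$ of a graded hom, so the correspondence is built in — but verifying it is the only place where the specific form of $L$ is needed beyond the identity $B/L \cong A$.
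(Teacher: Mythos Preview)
Your proposal is correct and follows essentially the same route as the paper: both identify $L$ with the kernel of the evaluation map $B \to A$, $a_i t^i \mapsto a_i$, to give $\varphi(N)=N/LN$ its $A$-module structure and filtration, and both verify the adjunction by the explicit degreewise correspondence $g(n_i)=h(\pi_N(n_i))\,t^i$ together with the observation that $L$ kills the image so the inverse map is well-defined. The only cosmetic difference is that you phrase the passage from a graded map to a filtered one via the ``forget $t$'' $B$-linear map $\grees M \to M$, whereas the paper writes out the same construction coordinatewise.
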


\begin{proof}
First we show that $\varphi$ is a well-defined functor. This amounts to showing that $(\grees{A})/L \iso A$ so that $M/LM$ has a natural $A$-module structure. There is a natural homomorphism  of ordinary modules
\begin{align*}
\grees{A} & \longrightarrow A \\
a_i t^i & \longmapsto a_i
\end{align*}
and the kernel is clearly $L$. Also, it is surjective because the span of $\{F^i A\}$ is $A$. For the filtration, note that the span of the images of $M_i$ in the quotient $M/LM$ clearly span the quotient. Also, if $a_i \in F^i A$ and $m_j \in M_j$, then $a_i(m_j+LM)=a_i t^i(m_j+LM) \in M_{i+j}+LM$, so this truly gives a filtered $A$-module structure.

To show the adjunction, we show that $\Homfilt(\varphi(M),N) \iso \Homgr(M,\grees{N})$ for $M$ a graded $\grees(A)$-module and $N$ a filtered $A$-module. For $f \in \Homfilt(\varphi(M),N)$, we will define a corresponding $g \in \Homgr(M,\grees{N})$ degreewise in $M$. Suppose $m_i \in M_i$. By the filtration on $\varphi(M), f(m_i+LM) \in f(F^i \varphi(M)) \subseteq F^i N$. So define $g(m_i)=f(m_i+LM)t^i$ and extend linearly. This defines a graded homomorphism as required.

To go the other way, suppose $g \in \Homgr(M,\grees{N})$. For $\overline{m_i} \in F^i \varphi(M)$, pick some $m_i \in M_i$ such that $m_i+LM=\overline{m_i}$. Define $f \in \Homfilt(\varphi(M),N)$ by setting $f(\overline{m_i})=n_i$ if $g(m_i)=n_i t^i$ and extending linearly. To see that this is well-defined, we need to show that $g(LM)=0$. Yet this is clearly true because $g(LM)=Lg(M) \subseteq L\grees{N}=0$ by action of $\grees{A}$ on $\grees{N}$. It is clear that this homomorphism is filtered, and these correspondences are inverse to each other.
\end{proof}

\begin{lem}
The adjunction $\varphi \dashv \grees$ is a Quillen adjunction of model categories, i.e.~$\varphi$ preserves cofibrations and trivial cofibrations while $\grees$ preserves fibrations and trivial fibrations.
\end{lem}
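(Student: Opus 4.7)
My plan is to use Proposition \ref{prop:quillfuncttfae} and check that $\varphi$, extended levelwise to cochain complexes (where it remains left adjoint to $\grees$), preserves cofibrations and trivial cofibrations. For these cofibrantly generated model structures, cofibrations are $\cofmap{I_F}$ and $\cofmap{I_{\rm gr}}$ and trivial cofibrations are $\cofmap{J_F}$ and $\cofmap{J_{\rm gr}}$, so a standard adjunction--lifting argument will reduce the task to showing $\varphi(I_{\rm gr}) \subseteq I_F$ and $\varphi(J_{\rm gr}) \subseteq J_F$.

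The central computation is $\varphi(B(r)) \iso A\langle r\rangle$ as filtered $A$-modules. Indeed, $B(r)_i = B_{i-r} = (F^{i-r}A)\,t^{i-r}$ maps under the quotient $B \twoheadrightarrow B/L \iso A$ onto $F^{i-r}A$, so by the definition of the filtration on $\varphi$ we obtain $F^i\varphi(B(r)) = F^{i-r}A = F^i(A\langle r\rangle)$. The complexes $S^n(r)$ and $D^n(r)$ consist of copies of $B(r)$ placed in specified homological degrees with the identity as the only non-zero differential, so functoriality of $\varphi$ together with $\varphi(\ident) = \ident$ give $\varphi(S^n(r)) = S^n\langle r\rangle$ and $\varphi(D^n(r)) = D^n\langle r\rangle$. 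The generating maps $S^{n+1}(r) \to D^n(r)$ and $0 \to D^n(r)$ are therefore sent to $S^{n+1}\langle r\rangle \to D^n\langle r\rangle$ and $0 \to D^n\langle r\rangle$, establishing $\varphi(I_{\rm gr}) \subseteq I_F$ and $\varphi(J_{\rm gr}) \subseteq J_F$.

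To propagate from generators to all (trivial) cofibrations, I would combine the characterization via lifting properties with the adjunction. For $f \in \cofmap{I_{\rm gr}}$ and $g \in \injmap{I_F}$, the adjunction gives $\varphi(f) \boxslash g \iff f \boxslash \grees(g)$, so to conclude $\varphi(f) \in \cofmap{I_F}$ it suffices to show $\grees(g) \in \injmap{I_{\rm gr}}$, that is, $I_{\rm gr} \boxslash \grees(g)$. By adjunction again this is $\varphi(I_{\rm gr}) \boxslash g$, which follows from $\varphi(I_{\rm gr}) \subseteq I_F \subseteq \cofmap{I_F}$ and $g \in \injmap{I_F}$. Hence $\varphi(\cofmap{I_{\rm gr}}) \subseteq \cofmap{I_F}$, and substituting $J$ for $I$ throughout handles trivial cofibrations.

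I expect the main obstacle to be the bookkeeping in the central computation: one must carefully match the grading shift $(r)$ on $\catdash$ with the filtration shift $\langle r\rangle$ on $\cat$ under the quotient $B \to B/L \iso A$, and verify that the zero and identity structure maps match after passing through $\varphi$. Once this identification is in place, everything else is a formal consequence of the adjunction $\varphi \dashv \grees$ together with the cofibrant generation of the two model structures.
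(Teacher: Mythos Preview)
Your proposal is correct and follows essentially the same route as the paper: identify $\varphi(I_{\rm gr})=I_F$ and $\varphi(J_{\rm gr})=J_F$ via the computation $\varphi(B(r))\iso A\langle r\rangle$, then use the adjunction $\varphi\dashv\grees$ to transfer lifting properties and conclude via Proposition~\ref{prop:quillfuncttfae}. The only cosmetic difference is that the paper invokes part~(iv) of that proposition (showing $\varphi$ preserves cofibrations and $\grees$ preserves fibrations) whereas you invoke part~(ii) (showing $\varphi$ preserves cofibrations and trivial cofibrations); the underlying lifting argument is identical.
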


\begin{proof}
First we show that $\grees(\injmap{\varphi(I)}) \subseteq \injmap{I}$ and $\varphi(\cofmap{I}) \subseteq \cofmap{\varphi(I)}$ for an arbitrary class of maps $I$. Suppose $f \in \injmap{\varphi(I)}$ and $g \in I$ such that there is a diagram of the form
\begin{equation*}
\xymatrix{
A \ar[d]_{g} \ar[r]^{} & \grees{X} \ar[d]^{\grees f} \\
B \ar[r]_{} & \grees{Y}
}
\end{equation*}
We need to show this diagram has a lift. By adjointness, we may form the following diagram
\begin{equation*}
\xymatrix{
\varphi(A) \ar[d]_{\varphi(g)} \ar[r]^{} & X \ar[d]^{f} \\
\varphi(B) \ar[r]_{} & Y
}
\end{equation*}
which has a lift $h:\varphi(B) \rightarrow X$. It is easy to see that the corresponding map $h':B \rightarrow \grees{X}$ is a lift for the first diagram. We can abbreviate this argument to one line by abuse of notation and remembering that adjointness works similarly with the symbol $\boxslash$ as it does with $\Hom$: $\varphi(I) \boxslash \injmap{\varphi(I)} \Rightarrow I \boxslash \grees(\injmap{\varphi(I)})$. Similarly, we have 
\begin{align*}
\cofmap{I} \boxslash \injmap{I} & \Rightarrow \cofmap{I} \boxslash \grees(\injmap{\varphi(I)}) \\
& \Rightarrow \varphi(\cofmap{I}) \boxslash \injmap{\varphi(I)} \\
& \Rightarrow \varphi(\cofmap{I}) \subseteq \cofmap{\varphi(I)}
\end{align*}

Now we apply the above to the model categories $\cat$ and $\catdash$. First note that $\varphi(J_\mathrm{gr})=J_F$ and $\varphi(I_\mathrm{gr})=I_F$. Now we have $\grees(\injmap{\varphi(J_\mathrm{gr})})=\grees(\injmap{J_F}) \subseteq \injmap{J_\mathrm{gr}}$, showing that $\grees$ maps fibrations to fibrations. Similarly, $\varphi(\cofmap{I_\mathrm{gr}}) \subseteq \cofmap{\varphi(I_\mathrm{gr})}=\cofmap{I_F}$ so $\varphi$ maps cofibrations to cofibrations. By Proposition \ref{prop:quillfuncttfae}, the adjunction is a Quillen adjunction.
\end{proof}

\subsection{Filtered projective modules}
\label{ss:filtproj}

\begin{defn}
Let $A$ be a filtered algebra. A filtered module $P$ is called (filtered) projective if for any filtration surjective homomorphism $p:M \rightarrow N$ and any homomorphism $g:P \rightarrow N$, there exists a homomorphism $h:P \rightarrow M$ such that $ph=g$.
\end{defn}

There are many reasons for this to be the correct definition of projective in this context, including the following two lemmas.

\begin{lem}
An $A$-module $P$ is filtered projective if and only if it is a summand of a direct sum of (possibly filtration shifted) copies of $A$.
\end{lem}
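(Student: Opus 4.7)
The plan is to adapt the classical proof that ``projective iff summand of free'' to the filtered setting, with due care for filtration degrees.

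For the ``if'' direction, the first task is to show that each shifted algebra $A\langle r\rangle$ is itself filtered projective. Given a filtration-surjective $p \colon M \rightarrow N$ and a filtered map $g \colon A\langle r\rangle \rightarrow N$, note that $1 \in F^0 A = F^r(A\langle r\rangle)$, so $g(1) \in F^r N$; by filtration surjectivity, choose $m \in F^r M$ with $p(m) = g(1)$ and define $h(a) = am$. The containment $(F^{i-r} A)(F^r M) \subseteq F^i M$ shows that $h$ respects filtrations, and $ph = g$ by construction. Standard formal nonsense then yields that arbitrary direct sums of filtered projectives are filtered projective (lift componentwise), and that direct summands of filtered projectives are filtered projective (pre- and post-compose with the structural inclusion and projection), so any summand of $\bigoplus_j A\langle r_j\rangle$ is filtered projective.

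For the ``only if'' direction, I want to build an explicit filtration-surjective map onto $P$ from a direct sum of shifts of $A$. For each integer $r$ and each $m \in F^r P$, take a copy $A\langle r\rangle_m$ of $A\langle r\rangle$ and the filtered map $A\langle r\rangle_m \rightarrow P$ sending $1 \mapsto m$ (filtered by the same $(F^{i-r} A)(F^r P) \subseteq F^i P$ check as above). Assembling these produces a filtered homomorphism
\[
\pi \colon \bigoplus_{r \in \ZZ}\, \bigoplus_{m \in F^r P} A\langle r\rangle_m \longrightarrow P.
\]
At filtration level $i$, every $m \in F^i P$ is the image of $1$ in the summand $A\langle i\rangle_m$, which sits in $F^i$ of the direct sum, so $\pi$ is filtration-surjective. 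Filtered projectivity of $P$ then lifts $\ident_P$ through $\pi$, giving a retraction that exhibits $P$ as a direct summand of the filtered free module on the right.

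The only real subtlety is keeping the distinction between ordinary surjectivity and filtration surjectivity straight. A naive construction using a single generating set of $P$ as an $A$-module would not hit $F^r P$ at filtration level $r$ in general, so we genuinely need a separate summand for each element of each $F^r P$; because the filtration is generalized and need not be monotone, these choices cannot be collapsed into a single index $r$. Once this bookkeeping is in place, the rest of the argument is purely formal.
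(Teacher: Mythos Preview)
Your proof is correct and follows the same overall strategy as the paper: show that shifted copies of $A$ are filtered projective, pass to sums and summands, and for the converse build a filtration-surjective map from a filtered free module and split it using projectivity.

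The one genuine difference is in the ``only if'' direction. The paper begins with a finite generating set $\{p_i\}$ for $P$, writes each $p_i$ as a sum of elements in various filtration pieces, and forms the corresponding finite sum $\bigoplus_i A\langle r_i\rangle$; since this map need not be filtration-surjective, it then iteratively adjoins further summands $A\langle r\rangle$ for any $p \in F^r P$ not yet hit at level $r$. You bypass this by taking, from the start, one summand $A\langle r\rangle_m$ for every $r$ and every $m \in F^r P$, guaranteeing filtration surjectivity in one stroke. Your construction is cleaner and does not rely on any finiteness, at the cost of producing a much larger (possibly infinite) free module; the paper's version stays finite, which matches its ambient finite-dimensional setting but leaves the termination of the enlargement process implicit.
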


\begin{proof}
Suppose $P$ is a summand of $L=A\langle r_1\rangle \oplus \dotsb \oplus A\langle r_k\rangle$. Let $g:M \rightarrow N$ be a filtration surjective homomorphism and let $g:P \rightarrow N$ be any homomorphism. Write $q:L \rightarrow P$ for the projection map and $i:P \rightarrow L$ for the inclusion map. Let $n_1,\dotsc,n_k \in N$ be the images of $1$ (in each copy of $A$) under the composite map $gq$. Since the copies of $A$ are filtration shifted we have $n_i \in F^{r_i}N$ for each $i$. Let $m_i \in F^{r_i}M$ such that $p(m_i)=n_i$ for each $i$. There is a unique homomorphism $h':L \rightarrow M$ which maps the $i$th copy of $1$ to $m_i$, so the map $h=h'i$ is a lift and $P$ is projective.

Conversely, suppose $P$ is projective. The module $P$ has a generating set $\{p_i\}$. By writing each generator as the sum of different filtration components, we may assume that each generator $p_i$ is contained in some filtered part $F^{r_i}P$ for integers $r_i$. As above, there is a unique homomorphism $q:L \rightarrow A$ where $L=\oplus_i A\langle r_i \rangle$ mapping the $i$th copy of $1$ to $p_i$. Clearly this map is surjective. If it isn't filtration surjective, suppose there is some $p \in F^r P$ such that $p \notin q(F^r F)$. Then we can add $p$ to the list of generators, replace $L$ with $L \oplus A\langle r\rangle$, and try again. Thus we have a filtration surjective homomorphism $q:L \rightarrow P$. Using projectivity, we show that $q$ has a right inverse $i:P \rightarrow L$ with $pi=\ident_P$.
\end{proof}

\begin{rem}
It doesn't matter if $P$ is a summand as a filtered module or not. If $P$ is a summand of a module $L=\oplus_i A\langle r_i\rangle$ as a module over an ordinary algebra $A$, then $P$ can be given a filtration compatible with the filtration on $F$. Namely, define $F^i P=p(F^i L)$ where $p$ the canonical projection $p:F \rightarrow P$.
\end{rem}

\begin{lem}
If $X$ is a cofibrant cochain complex in $\chmod{A}$ then for each $n \in \ZZ$, $X^n$ is filtered projective. Conversely, if $X$ is a complex which is bounded above such that $X^n$ is filtered projective, then $X$ is cofibrant.
\end{lem}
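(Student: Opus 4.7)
The plan is to prove both directions by converting lifting problems at the level of filtered modules into lifting problems in $\chmod{\cat}$ against carefully chosen trivial fibrations. The key tool is a "disk"-type complex $P^n(N)$ built from a single filtered module $N$, and for the converse also the filtration-wise acyclicity of $\ker p$ for $p$ a trivial fibration.

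\textbf{Forward direction.} Suppose $X$ is cofibrant. For any filtered module $N$ and integer $n$, let $P^n(N)$ denote the cochain complex with $N$ in degrees $n-1$ and $n$, identity differential between them, and zero elsewhere (with the evident filtration inherited from $N$). This is functorial, and if $p\colon M \to N$ is filtration surjective then $P^n(p)\colon P^n(M) \to P^n(N)$ is filtration-degreewise surjective; moreover in each filtration degree both source and target are contractible, so $F^i P^n(p)$ is a quasi-isomorphism. Hence $P^n(p)$ is a trivial fibration. Given $g\colon X^n \to N$, extend $g$ to a cochain map $\phi\colon X \to P^n(N)$ by $\phi^n = g$ and $\phi^{n-1} = g \circ d_X^{n-1}$ (the chain-map condition reduces to $d_X \circ d_X = 0$). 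Cofibrancy of $X$ provides a lift $\tilde{\phi}\colon X \to P^n(M)$, and $\tilde{\phi}^n\colon X^n \to M$ is the desired lift of $g$ through $p$.

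\textbf{Converse.} Suppose $X$ is bounded above with each $X^n$ filtered projective. Given a trivial fibration $p\colon Y \to Z$ and a cochain map $g\colon X \to Z$, construct a lift $h\colon X \to Y$ by downward induction on $n$, starting from $h^n = 0$ for $n$ sufficiently large. Inductively suppose $h^k$ is defined for $k > n$. Since $p^n$ is filtration surjective, filtered projectivity of $X^n$ yields a filtered lift $h_0^n\colon X^n \to Y^n$ of $g^n$. Measure the failure of the chain-map condition by
\[
e := d_Y^n h_0^n - h^{n+1} d_X^n \colon X^n \longrightarrow Y^{n+1}.
\]
Using the inductive hypothesis $d_Y^{n+1} h^{n+1} = h^{n+2} d_X^{n+1}$ together with $d_X \circ d_X = 0$, one checks $p^{n+1} e = 0$ and $d_Y^{n+1} e = 0$, so $e$ factors through $\ker d_K^{n+1}$, where $K := \ker p$. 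Because $p$ is a trivial fibration, each $F^i p$ is a surjective quasi-isomorphism, so $F^i K$ is acyclic and $d_K^n \colon K^n \to \ker d_K^{n+1}$ is filtration surjective. A second application of filtered projectivity of $X^n$ provides $s\colon X^n \to K^n$ with $d_K^n s = e$, and $h^n := h_0^n - s$ satisfies both $p^n h^n = g^n$ and $d_Y^n h^n = h^{n+1} d_X^n$, closing the induction.

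\textbf{Obstacles.} The forward direction is formal once $P^n(N)$ is written down. The substantive content is in the converse, where filtered projectivity is invoked twice and one needs filtration-wise acyclicity of $\ker p$; this is precisely where the filtered refinement of the weak-equivalence condition plays an essential role. The bounded-above hypothesis is unavoidable in the converse because the downward induction must be initialized, but plays no role in the forward direction.
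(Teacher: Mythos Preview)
Your proof is correct and follows precisely the standard argument from \cite[2.3.6]{hovey} that the paper defers to; the paper's own proof is simply the one-line instruction to adapt Hovey's proof, noting that fibrations here are filtration surjective rather than merely surjective. Your write-up spells out exactly that adaptation, including the two places where filtration-wise data matters (filtration surjectivity of $P^n(p)$ and of $d_K^n\colon K^n \to \ker d_K^{n+1}$), so there is nothing to add.
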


\begin{proof}
Adapt the proof of the similar fact in \cite[2.3.6]{hovey}. The key fact here is that fibrations in this model structure are filtration surjective, not just surjective.
\end{proof}

\begin{defn}
Let $M$ be a filtered $A$-module. A filtered projective resolution of $M$ consists of a complex $P$ (indexed following the chain complex convention, with $P_n=0$ for $n<0$) and a homomorphism $P_0 \rightarrow M$ such that
\begin{enumerate}[label=(\roman*)]
\item The complex $P$ is filtered exact at each $n>0$, i.e.~$H_n(F^i P)=0$ for all $i$.

\item The homomorphism $P_0 \rightarrow M$ is filtered surjective.
\end{enumerate}
\end{defn}

It is easy to see using the previous lemmas that filtered projective resolutions exist and are cofibrant replacements for complexes concentrated in one homological degree.

\begin{defn}
For two filtered modules $M,N$, define 
\begin{equation*}
\Extfilt(M,N)=\Hom_{\derivcat{\cat}}(\gamma M,\gamma N[i])
\end{equation*}
\end{defn}

\begin{prop}
\label{prop:extfilt-extgr}
For any two filtered $A$-modules $M$ and $N$, we have 
\begin{equation*}
\Extfilt^i(M,N) \iso \Extgr^i(\grees M,\grees N)
\end{equation*}
\end{prop}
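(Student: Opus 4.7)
The plan is to compute both sides using an explicit filtered projective resolution of $M$ and transfer the computation via the adjunction $\varphi \dashv \grees$. Let $P_\bullet \to M$ be a filtered projective resolution, which by the preceding lemmas is a cofibrant replacement of $M$ in $\chmod{\cat}$; hence $\Extfilt^i(M, N) = H^i \Homfilt(P, N)$.

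The central step is to show that $\grees P_\bullet \to \grees M$ is a graded projective resolution of $\grees M$. First, each $\grees P_n$ is graded projective: since $P_n$ is a summand of some $L = \bigoplus_j A\langle r_j\rangle$, and $\grees$ commutes with direct sums and satisfies $\grees(A\langle r\rangle) \cong (\grees A)(r)$, the module $\grees L$ is graded free and $\grees P_n$ is a graded summand of it. Second, $\grees P_\bullet$ is exact in positive degrees: its graded component in degree $i$ is precisely the complex $F^i P_\bullet$, which has vanishing homology in positive degrees by the definition of filtered exactness. Third, $\grees P_0 \to \grees M$ is graded surjective, since in each grading $i$ it is the surjection $F^i P_0 \twoheadrightarrow F^i M$ coming from filtration surjectivity of $P_0 \to M$. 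Therefore $\Extgr^i(\grees M, \grees N) = H^i \Homgr(\grees P, \grees N)$.

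It remains to identify the two Hom complexes. The adjunction $\varphi \dashv \grees$ gives $\Homgr(\grees P_n, \grees N) \cong \Homfilt(\varphi \grees P_n, N)$ for each $n$. A direct check on the building blocks shows $\varphi \grees A\langle r\rangle \cong A\langle r\rangle$; since both $\varphi$ and $\grees$ preserve direct sums and summands, it follows that $\varphi \grees P_n \cong P_n$ for every filtered projective $P_n$. Naturality of the adjunction isomorphism implies compatibility with the differentials, yielding an isomorphism of cochain complexes $\Homfilt(P, N) \cong \Homgr(\grees P, \grees N)$. Taking $H^i$ on both sides gives the result.

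I expect the main obstacle to be the middle step, namely verifying that applying $\grees$ to a filtered projective resolution produces a genuine graded projective resolution. The subtle point is the passage from the graded-component-wise exactness condition built into the definition of ``filtered exact'' to honest exactness of $\grees P_\bullet$ as a complex of graded $\grees A$-modules---this is precisely where the extra data carried by the filtered resolution (as opposed to a mere projective resolution of the underlying module) is used in an essential way.
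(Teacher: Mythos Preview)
Your proof is correct and follows essentially the same approach as the paper: both take a filtered projective resolution $P$ of $M$, show that $\grees P$ is a graded projective resolution of $\grees M$, use $\varphi\grees P \cong P$ on projectives, and apply the adjunction $\varphi \dashv \grees$. The only difference is presentational---the paper routes the argument through the derived adjunction $\Lder\varphi \dashv \Rder\grees$ and the model-category description of $\Lder\varphi$ via cofibrant replacement, whereas you work directly with the Hom-complexes, but the underlying computation is identical.
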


\begin{proof}
As $\catdash$ is an abelian category, we know that 
\begin{equation*}
\Extgr^i(\grees M,\grees N)\iso \Hom_{\derivcat{\catdash}}(\gamma\grees M,\gamma\grees N[i])
\end{equation*}
Now use the derived adjunction:
\begin{align*}
\Hom_{\derivcat{\catdash}}(\gamma\grees M,\gamma\grees N[i])& \iso \Hom_{\derivcat{\catdash}}(\gamma\grees M,\Rder\grees \gamma N[i]) \\
& \iso \Hom_{\derivcat{\cat}}(\Lder\varphi\gamma\grees M,\gamma N[i]) \\
& \iso \Hom_{\derivcat{\cat}}((\Ho \gamma\varphi)\circ (\Ho \gamma Q) \circ \gamma\grees M, \gamma N[i]) \\
& =\Hom_{\derivcat{\cat}}((\gamma\varphi Q\grees M, \gamma N[i]) \\
\end{align*}
Now suppose we have a projective resolution $P$ for $M$. As $\grees$ is clearly an additive functor, it maps projective modules to projective modules, since in both cases these are (possibly shifted) summands of the algebra. The map $P_0 \rightarrow M$ induces a trivial fibration $P \rightarrow M$, and as $\grees$ is a right Quillen functor, so is $\grees P \rightarrow \grees M$. Thus a cofibrant replacement for $\grees M$ is given by $\grees P$. Yet $\varphi(\grees{A}) \iso A$, and the same is true for any summand of $A$, so $\varphi(\grees{P}) \iso P$ and the final $\Hom$-space is really just 
\begin{equation*}
\Hom_{\derivcat{\cat}}(\gamma P, \gamma N[i]) \iso \Hom_{\derivcat{\cat}}(\gamma M, \gamma N[i])=\Extfilt^i(M,N)
\end{equation*}
\end{proof}

\begin{rem}
The category $\cat=\filtmod{A}$ is not abelian, but it is in fact what Schneiders calls quasi-abelian \cite{qacs}. Quasi-abelian categories are so close to being abelian categories that nearly all of the tools from homological algebra carry through, not just derived functors. As we only need the $\Ext$-groups in $\cat$ for what follows, we decided to recharacterize this work in terms of model categories to keep the number of prerequisites down.
\end{rem}

\section{Rigidity of tilting modules}

\subsection{Tilting modules for quasi-hereditary algebras}

Let $A$ be a finite-di\-men\-sion\-al $K$-algebra. We recall the notion of a quasi-hereditary algebra. Suppose the irreducible $A$-modules $L(\lambda)$ are indexed by a poset $\Lambda$. Let $P(\lambda)$ and $I(\lambda)$ denote the projective cover and injective hull of $L(\lambda)$ respectively. Let $\weyl(\lambda)$ be the maximal quotient of $P(\lambda)$ whose composition factors are among $\{L(\mu) \mid \mu \leq \lambda\}$. These are the Weyl or standard modules. Define $\dweyl(\lambda)$ (the good or costandard modules) dually. We say that $A$ is quasi-hereditary if for all $\lambda \in \Lambda$
\begin{enumerate}[label=(\roman*)]
\item $\End_A \weyl(\lambda) \iso k$,

\item $P(\lambda)$ has a $\weyl$-filtration, i.e.~there is a series of submodules
\begin{equation*}
0=P_0<P_1<P_2<\dotsb<P_n=P(\lambda)
\end{equation*}
with $P_k/P_{k-1} \iso \weyl(\lambda_k)$ for some $\lambda_k \in \Lambda$.
\end{enumerate}
For graded quasi-hereditary algebras, a $\weyl$-filtration uses grade shifted copies of Weyl modules.

In \cite{ringel} Ringel constructed tilting modules for a quasi-hereditary algebra $A$. There are several notions of tilting and cotilting modules throughout representation theory, but in the special case of quasi-hereditary algebras there is an elementary description. We summarize this characterization of tilting modules in the next theorem.

\begin{thm}
Let $A$ be a quasi-hereditary algebra. For each weight $\lambda \in \Lambda$, there exists a unique indecomposable module $T(\lambda)$ such that 
\begin{enumerate}[label=(\roman*)]
\item $T(\lambda)$ has both a $\weyl$-filtration and a $\dweyl$-filtration.

\item There is a unique embedding of $\weyl(\lambda)$ as a submodule of $T(\lambda)$ and a unique quotient of $T(\lambda)$ isomorphic to $\dweyl(\lambda)$.

\item If $L(\mu)$ is a composition factor of $T(\lambda)$ then $\mu \leq \lambda$.
\end{enumerate}
\end{thm}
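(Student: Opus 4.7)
The plan is to follow Ringel's classical approach: construct $T(\lambda)$ by iteratively forming universal extensions of $\weyl(\lambda)$ by lower tilting modules, and then deduce uniqueness from the Ext-vanishing built into the quasi-hereditary setup.

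The first step is to invoke (or prove by induction on $\Lambda$ from the quasi-hereditary axioms) the fundamental homological input: $\Ext_A^i(\weyl(\mu),\dweyl(\nu))=0$ for all $i\geq 1$ and all $\mu,\nu\in\Lambda$. From this one derives the standard filtration criterion that a module $M$ admits a $\weyl$-filtration iff $\Ext_A^1(M,\dweyl(\nu))=0$ for every $\nu$, and dually $M$ admits a $\dweyl$-filtration iff $\Ext_A^1(\weyl(\nu),M)=0$ for every $\nu$. For existence I would induct on $\lambda$ along any linear refinement of the partial order, assuming all $T(\mu)$ with $\mu<\lambda$ have already been constructed. The base case is immediate since the minimal $\weyl(\lambda)=L(\lambda)=\dweyl(\lambda)$. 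For general $\lambda$, set $M_0=\weyl(\lambda)$, and while some $\mu<\lambda$ satisfies $d:=\dim_K\Ext_A^1(\weyl(\mu),M_i)>0$, form the universal extension
\begin{equation*}
0\longrightarrow M_i\longrightarrow M_{i+1}\longrightarrow T(\mu)^{d}\longrightarrow 0.
\end{equation*}
Because $T(\mu)$ has a $\dweyl$-filtration one has $\Ext_A^1(\weyl(\nu),T(\mu))=0$ for all $\nu$, so this extension kills the chosen Ext class without reintroducing any previously killed one; because $T(\mu)$ also has a $\weyl$-filtration, $M_{i+1}$ inherits one. The composition factors remain within $\{L(\nu):\nu\leq\lambda\}$, so the process terminates in finitely many steps, yielding a $T(\lambda)$ with both filtrations, with $\weyl(\lambda)$ at the bottom of its $\weyl$-filtration and $\dweyl(\lambda)$ at the top of its $\dweyl$-filtration.

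For uniqueness and indecomposability, given any other $T'$ satisfying (i)--(iii), the embedding $\weyl(\lambda)\hookrightarrow T'$ extends to a morphism $f:T(\lambda)\to T'$ by applying $\Hom_A(-,T')$ to $0\to\weyl(\lambda)\to T(\lambda)\to T(\lambda)/\weyl(\lambda)\to 0$ and using the Ext-vanishing (the quotient has a $\weyl$-filtration indexed by $\mu<\lambda$, while $T'$ has a $\dweyl$-filtration). A symmetric map $g:T'\to T(\lambda)$ exists, and both $gf$ and $fg$ restrict to the identity on the unique copies of $\weyl(\lambda)$. By a Fitting-type argument, using $\End_A\weyl(\lambda)\cong K$ and the fact that any endomorphism failing to be an isomorphism must vanish on $\weyl(\lambda)$ and hence be nilpotent, both compositions are automorphisms. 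The same argument applied with $T'=T(\lambda)$ shows that $\End_A T(\lambda)$ is local, giving indecomposability.

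The main obstacle is the careful bookkeeping needed to ensure the universal extension procedure terminates and that the inductively constructed $T(\lambda)$ genuinely acquires a $\dweyl$-filtration rather than only partial Ext-vanishing; the key technical move --- using $T(\mu)$ rather than $\weyl(\mu)$ in the extensions, so that $\Ext_A^1(\weyl(\nu),-)$ already killed for some $\nu$ is not re-created --- is what makes the induction propagate cleanly. A more conceptual alternative would be to realize $\bigoplus_\mu T(\mu)$ as the Ext-injective cogenerator of the exact category of $\weyl$-filtered modules, invoking general existence theorems, but the direct construction above is both elementary and yields the desired structural properties (ii) and (iii) by inspection.
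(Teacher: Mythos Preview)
The paper does not prove this theorem at all: it is stated as a summary of Ringel's classification, with a citation to \cite{ringel}, and no argument is given. So there is no ``paper's own proof'' to compare against; your sketch is in fact a reconstruction of what the cited reference does.

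Your outline is essentially the standard Ringel argument and is correct in spirit. A couple of small points worth tightening if you intend this as a self-contained proof rather than a sketch. First, in the iterative universal-extension step you should specify that $\mu$ is chosen \emph{maximal} among those with $\Ext^1_A(\weyl(\mu),M_i)\neq 0$; otherwise the claim that ``no previously killed Ext is reintroduced'' requires more justification than the long exact sequence alone provides. Second, your construction via extensions by $T(\mu)^d$ (rather than Ringel's original extensions by $\weyl(\mu)^d$) may produce a decomposable module, since nothing prevents a trivial $T(\mu)$-summand from splitting off; you implicitly handle this by proving $\End_A T(\lambda)$ is local afterwards, but strictly speaking you should either pass to the indecomposable summand containing $\weyl(\lambda)$ at the end of the construction, or note that the uniqueness argument applied to your $M$ and its summand containing $\weyl(\lambda)$ forces them to coincide. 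Both are minor and easily fixed.
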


In fact a module $M$ has a $\dweyl$-filtration if $\Ext^1(\weyl(\lambda),M)=0$ for all $\lambda \in \Lambda$. Similarly, $M$ has a $\weyl$-filtration if $\Ext^1(M,\dweyl(\lambda))=0$ for all $\lambda \in \Lambda$. For the rest of this section we will assume that $A$ is a finite-dimensional quasi-hereditary algebra. We give $A$ a filtration structure using the radical series, as seen in Example \ref{exam:rad-filt}.

Suppose $M$ is an $A$-module with a $\weyl$-filtration $0=M_0<M_1< \dotsb < M_n=M$. Following \cite{bowman-martin} let $[\rad_s M: \head \weyl(\lambda)]$ denote the number of successive subquotients $M_{n_{s,i}}/M_{n_{s,i}-1}$ isomorphic to $\weyl(\lambda)$ such that $M_{n_{s,i}} \leq \rad^s M$ and such that there is a map $\rad^s M \rightarrow \weyl(\lambda)$ extending the quotient map $M_{n_{s,i}} \rightarrow \weyl(\lambda)$. We note that the value of $[\rad_s M: \head \weyl(\lambda)]$ does not depend on the choice of $\weyl$-filtration.

\begin{defn}
Let $M$ be an $A$-module. We say that $M$ has a radical-respecting $\weyl$-filtration if $M$ has a $\weyl$-filtration such that the homomorphisms $\rad^s M \rightarrow \weyl(\lambda)$ used to calculate $[\rad_s M : \weyl(\lambda)]$ induce isomorphisms $(\rad^{s+t} M \cap M_{n_{s,i}}+M_{n_{s,i}-1})/M_{n_{s,i}-1} \iso \rad^t \weyl(\lambda)$ for all $i$ and all $t \geq 0$.
\end{defn}

Varying $s$ and $i$, consider each $M_{n_{s,i}}/M_{n_{s,i}-1}$ as a subquotient of $\rad^s M$, which should be viewed as a module in its own right (i.e.~$J^m \rad^s M=\rad^{s+m} M$). The definition above is equivalent to saying that the isomorphisms carrying the subquotient $M_{n_{s,i}}/M_{n_{s,i}-1}$ to $\weyl(\lambda)$ are actually filtered isomorphisms. This implies that the Loewy layers of $M$ can be determined from the $\weyl$-filtration and the Loewy structure of the modules $\weyl(\lambda)$ using the following formula:
\begin{equation}
[\rad_s M: L(\mu)]=\sum_{\substack{t \leq s\\
\lambda \in \Lambda}} [\rad_t M : \head \weyl(\lambda)][\rad_{s-t} \weyl(\lambda) : L(\mu)] \label{eq:radresploewy}
\end{equation}

\begin{lem}
\label{lem:allradresp}
If a module $M$ has at least one radical-respecting $\weyl$-filtration, then all $\weyl$-fil\-tra\-tions are radical-respecting.
\end{lem}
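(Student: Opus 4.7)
The proof idea is to combine the intrinsic nature of both sides of equation \eqref{eq:radresploewy} with a layerwise count of composition factors coming from the $\weyl$-filtration. First I would observe that the right-hand side of \eqref{eq:radresploewy} is intrinsic to $M$: the multiplicities $[\rad_t M : \head\weyl(\lambda)]$ are filtration-independent, as noted in the text just before the definition of radical-respecting, and $[\rad_{s-t}\weyl(\lambda):L(\mu)]$ depends only on $\weyl(\lambda)$. The left-hand side is intrinsic tautologically. Since by hypothesis there exists a radical-respecting $\weyl$-filtration, equation \eqref{eq:radresploewy} holds as a numerical identity on these intrinsic quantities.

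Next, let $\{M_k\}$ be an arbitrary $\weyl$-filtration with $M_k/M_{k-1}\cong \weyl(\lambda_k)$, and let $s_k$ denote the largest integer with $M_k\leq\rad^{s_k}M$. The induced filtration on each subquotient,
\begin{equation*}
F^t(M_k/M_{k-1}) = (\rad^t M\cap M_k+M_{k-1})/M_{k-1},
\end{equation*}
always satisfies $F^{s_k+t}(M_k/M_{k-1})\supseteq \rad^t(M_k/M_{k-1})$ because $\rad^t M_k\subseteq \rad^{s_k+t}M\cap M_k$. Consequently every composition factor of $\weyl(\lambda_k)$ appearing in $\rad_u\weyl(\lambda_k)$ must land in $\rad_w M$ for some $w\geq s_k+u$. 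Summing these local contributions over $k$ produces a layerwise inequality between $[\rad_s M:L(\mu)]$ and the right-hand side of \eqref{eq:radresploewy}, with equality if and only if every composition factor lands in precisely layer $s_k+u$, i.e.\ each inclusion $\rad^t(M_k/M_{k-1})\subseteq F^{s_k+t}(M_k/M_{k-1})$ is actually an equality.

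Since \eqref{eq:radresploewy} is now known to be an intrinsic identity, equality in the layerwise inequality is forced for every $\weyl$-filtration. This forces each subquotient's induced filtration to coincide with its radical filtration, which is exactly the radical-respecting condition for $\{M_k\}$. The main obstacle is the bookkeeping needed to match the layerwise contributions to the right-hand side of \eqref{eq:radresploewy}: specifically, identifying which subquotients in $\{M_k\}$ play the role of the ``head'' subquotients counted by $[\rad_s M:\head\weyl(\lambda)]$ at each depth, and verifying that equality of total counts cannot be achieved through cancellation between subquotients whose composition factors land too deep compensated by others landing too shallow. This delicate step relies essentially on the filtration-independence of $[\rad_s M:\head\weyl(\lambda)]$ to rule out such cancellation.
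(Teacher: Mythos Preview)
Your approach is essentially the paper's: both establish \eqref{eq:radresploewy} as an intrinsic identity, observe that every composition factor of $\weyl(\lambda_k)$ at radical depth $u$ must land in $M$ at depth at least $s_k+u$, and then force equality. The paper dispatches your ``cancellation'' worry with a minimality argument rather than a global count: take the least value of $s_k+t$ at which some factor lands strictly deeper than predicted, and note that layer $s_k+t$ of $M$ is then a \emph{proper} sub-multiset of what \eqref{eq:radresploewy} prescribes (no factor can land too shallow, so nothing can compensate), contradicting the intrinsic identity.
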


\begin{proof}
Let $0=M_0<M_1< \dotsb < M_n=M$ be a $\weyl$-filtration. Say a subquotient $M_{k}/M_{k-1}$ isomorphic to $\weyl(\lambda_k)$ has a head on the $s_k$th radical layer of $M$, i.e.~the surjective quotient map $M_k \rightarrow \weyl(\lambda_k)$ extends to a map $\rad^{s_k} M \rightarrow \weyl(\lambda_k)$. Then for any $t \geq 0$, the restriction $\rad^{s_k+t} M \rightarrow \rad^t \weyl(\lambda_k)$ is still surjective. This shows that the composition factors from the $t$th radical layer of $\weyl(\lambda_k)$ occur at radical layer $h_{k,t} \geq s_k+t$. The $\weyl$-filtration is radical-respecting if $h_{k,t}=s_k+t$ in all such cases.

So suppose not, and pick $k$ and $t$ such that $s_k+t$ is minimal among those subquotients with $h_{k,t}>s_k+t$. By minimality the multiset of composition factors in the $(s_k+t)$th layer of $M$ must be subset of the multiset given by \eqref{eq:radresploewy}. Since at least one of these factors is missing from the $(s_k+t)$th layer, it must be a strict subset. But we already know that the Loewy series is given by \eqref{eq:radresploewy}, so this is impossible.
\end{proof}

\begin{prop}
\label{prop:grqhalg}
If the projective modules of $A$ have radical-respecting $\weyl$-fil\-tra\-tions, then $\grees{A}$ is graded quasi-hereditary.
\end{prop}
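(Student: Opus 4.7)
The plan is to identify explicit candidates for the graded standard modules and graded projective covers of $\grees A$ and then verify the defining axioms from Section 5.1 in the graded setting. Give $A$, each Weyl module $\weyl(\lambda)$, and each projective $P(\lambda)$ the radical filtration of Example \ref{exam:rad-filt}. Take the graded standard modules to be $\weyl^{\rm gr}(\lambda,r):=(\grees\weyl(\lambda))\langle r\rangle$, indexed by $\Lambda\times\ZZ$ with the product order extending the order on $\Lambda$. Since $(\grees A)_0=A$, the graded simples are the $L(\lambda)\langle r\rangle$ concentrated in a single degree, and their graded projective covers should be $(\grees P(\lambda))\langle r\rangle$; this is easily checked since $\grees P(\lambda)/\bigoplus_{i>0}(\rad^i P(\lambda))t^i\cong L(\lambda)$ concentrated in degree $0$.

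The endomorphism axiom is short: by the adjunction $\varphi\dashv\grees$ (which satisfies $\varphi\grees=\ident$ on our filtered modules), a graded $\grees A$-endomorphism of $\grees\weyl(\lambda)$ corresponds to a filtered $A$-endomorphism of $\weyl(\lambda)$. By quasi-hereditarity of $A$ any such endomorphism is a scalar, and scalars obviously preserve any filtration, so $\End^{\rm gr}_{\grees A}(\grees\weyl(\lambda))\cong K$.

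The heart of the proof is producing a graded $\weyl$-filtration on $\grees P(\lambda)$. Fix a radical-respecting $\weyl$-filtration $0=P_0<P_1<\dotsb<P_n=P(\lambda)$, which exists by hypothesis (and by Lemma \ref{lem:allradresp} every such $\weyl$-filtration is radical-respecting). Give each $P_k$ the induced filtration $F^iP_k=\rad^iP(\lambda)\cap P_k$. Then each inclusion $P_{k-1}\hookrightarrow P_k$ and each quotient $P_k\twoheadrightarrow P_k/P_{k-1}$ is a strict map of filtered modules, so $\grees$ preserves these sequences degreewise, producing a chain of graded submodules $0=\grees P_0\leq\dotsb\leq\grees P_n=\grees P(\lambda)$ with subquotients $\grees(P_k/P_{k-1})$. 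The content of the radical-respecting hypothesis is exactly that the induced filtration on each subquotient $P_k/P_{k-1}\iso\weyl(\lambda_k)$ agrees, up to a grade shift by the depth $s_k$ of the head of $\weyl(\lambda_k)$ in $P(\lambda)$, with the radical filtration on $\weyl(\lambda_k)$. Hence $\grees(P_k/P_{k-1})\iso\weyl^{\rm gr}(\lambda_k)\langle s_k\rangle$, yielding the desired graded $\weyl$-filtration.

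The main obstacle is this final exactness/compatibility step: without the radical-respecting hypothesis the induced filtration on the subquotient $\weyl(\lambda_k)$ could be strictly coarser than its radical filtration, and $\grees(P_k/P_{k-1})$ would fail to be a grade shift of $\grees\weyl(\lambda_k)$, causing the whole argument to collapse. Once this compatibility is in hand, the remaining verifications (that the composition factors of $\weyl^{\rm gr}(\lambda,r)$ lie below $(\lambda,r)$ in the chosen poset, and that $\grees A$ has enough graded projectives) are routine consequences of the corresponding facts for $A$ together with the construction of $\grees$.
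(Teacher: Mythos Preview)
Your core argument---applying $\grees$ to a radical-respecting $\weyl$-filtration of $P(\lambda)$ and using the hypothesis to identify each subquotient $\grees(P_k/P_{k-1})$ with a grade shift of $\grees\weyl(\lambda_k)$---is exactly what the paper does. Your additional check of the endomorphism axiom via the adjunction $\varphi\dashv\grees$ (using $\varphi\circ\grees\cong\ident$) is correct and is a detail the paper leaves implicit.

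There is, however, a slip in your treatment of the simples and projective covers. The inference ``$(\grees A)_0=A$, hence the graded simples are concentrated in a single degree'' is the standard argument for non-negatively graded algebras, but $\grees A$ with the radical filtration has $(\grees A)_i=A\cdot t^i$ for every $i\leq 0$, so it is far from non-negatively graded. Relatedly, $\bigoplus_{i>0}(\rad^i P(\lambda))t^i$ is not a $\grees A$-submodule of $\grees P(\lambda)$: the element $1\cdot t^{-1}\in(\grees A)_{-1}$ carries $(\rad P(\lambda))t^1$ into degree $0$, so the quotient you write down is not a module quotient. The paper handles this point differently: it takes the irreducibles to be $\grees L(\lambda)$ (supported in all degrees $\leq 0$) and obtains them as quotients of $\grees P(\lambda)$ by observing that $\grees$, as a right Quillen functor, preserves the fibration $P(\lambda)\twoheadrightarrow L(\lambda)$. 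Your conclusion that $\grees P(\lambda)$ is the projective cover labelled by $\lambda$ is still recoverable, but the justification must account for the negative-degree part of $\grees A$.
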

\begin{proof}
The projective modules for $\grees{A}$ are all of the form $\grees P(\lambda)$. The quotient map $P(\lambda) \rightarrow L(\lambda)$ is filtered surjective, so it is a fibration. As $\grees$ preserves fibrations we obtain a fibration of $\grees{A}$-modules, so $\grees L(\lambda)$ is a quotient of $\grees P(\lambda)$. It is clear that $\grees L(\lambda)$ is still irreducible as a $\grees{A}$-module, so this gives us both the irreducible $\grees{A}$-modules and their projective covers (up to grade shifting).

Let $0=P_0<P_1<\dotsb<P_n=P(\lambda)$ be a radical-respecting $\weyl$-filtration of $P(\lambda)$. As $A$ is quasi-hereditary, $P_n/P_{n-1} \iso \weyl(\lambda)$ and for $k<n$, $P_k/P_{k-1} \iso \weyl(\mu_k)$ and $\mu_k>\lambda$. For each subquotient $P_k/P_{k-1}$ there exists some $s_k$ such that as a filtered module $P_k/P_{k-1} \iso \weyl(\mu_k)$ when $P_k/P_{k-1}$ is viewed as a subquotient of $\rad^{s_k} P(\lambda)$. This means that when viewed as a subquotient of $P(\lambda)$, $P_k/P_{k-1} \iso \weyl(\mu_k)\langle s_k\rangle$.

The $\grees$ functor induces a chain of submodules $0=\grees P_0<\grees P_1<\dotsb<\grees P_n=\grees P(\lambda)$. In fact the subquotients in this filtration are isomorphic to $\grees \weyl(\mu)[s]$ for various $\mu$ and $s$, because
\begin{equation*}
\frac{\grees P_k}{\grees P_{k-1}} \iso \grees P_k/P_{k-1} \iso \grees(\weyl(\mu_k)\langle s_k\rangle) \iso \grees{\weyl(\mu_k)}(s_k)
\end{equation*}
Thus $\grees A$ is graded quasi-hereditary.
\end{proof}

\begin{defn}
A Weyl-irreducible (or $\weyl$-$L$) subquotient of a module $M$ is a subquotient $M'/M''$ isomorphic to a non-trivial extension of a module $W$ by $L(\mu)$, for some quotient $W$ of $\weyl(\lambda)$ and some weights $\lambda,\mu$ with $\mu>\lambda$. The subquotient $M'/M''$ is called a stretched subquotient if $M'$ is not isomorphic as a filtered module to a (possibly shifted) quotient of $P(\lambda)$.

An irreducible-good (or $L$-$\dweyl$) subquotient of a module $M$ is a subquotient $M'/M''$ isomorphic to a non-trivial extension of $L(\mu)$ by $U$, for some submodule $U$ of $\dweyl(\lambda)$ and some weights $\lambda,\mu$ with $\mu>\lambda$. The subquotient $M'/M''$ is called a stretched subquotient if $M'$ is not isomorphic as a filtered module to a (possibly shifted) submodule of $I(\lambda)$.
\end{defn}

\begin{thm}
\label{thm:greestilt}
Suppose $\grees A$ is quasi-hereditary. If a tilting module $T$ for $A$ has no stretched subquotients, then $\grees T$ is a tilting module for $\grees A$.
\end{thm}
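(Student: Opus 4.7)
The plan is to apply Ringel's characterization of tilting modules: it suffices to exhibit both a $\grees\weyl$-filtration and a $\grees\dweyl$-filtration on $\grees T$, since the bound $\mu \leq \lambda$ on composition factors of $T$ transfers immediately to $\grees T$ (the functor $\grees$ does not alter the underlying composition factors, only their gradings). The approach for both filtrations is parallel, exploiting the duality between the two stretched-subquotient hypotheses; I describe the $\grees\weyl$-filtration case first.

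Modelled on Proposition \ref{prop:grqhalg}, the plan is to produce a radical-respecting $\weyl$-filtration of $T$, so that applying $\grees$ yields a chain of graded submodules whose subquotients are shifted copies $\grees\weyl(\lambda_k)(s_k)$. Choose any $\weyl$-filtration $0 = T_0 < T_1 < \cdots < T_n = T$ with $T_k/T_{k-1} \iso \weyl(\lambda_k)$ as $A$-modules, ordered (using the freedom in reordering the filtration compatible with the dominance order) so that Weyl subquotients of larger weight appear first. For each $k$ let $s_k$ denote the radical layer of $T$ containing the head $L(\lambda_k)$ of the natural surjection $T_k \twoheadrightarrow \weyl(\lambda_k)$. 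It remains to verify that the induced filtration on each $T_k/T_{k-1}$ matches the shifted radical filtration of $\weyl(\lambda_k)$; equivalently, that no composition factor of $\rad^t \weyl(\lambda_k)$ appears strictly below the radical layer $s_k + t$ of $T$.

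The main obstacle is this layer-by-layer compatibility, and it is here that the no-stretched-$\weyl$-$L$-subquotient hypothesis must enter. Suppose for contradiction that some $L(\mu)$ occurring in $\rad^t \weyl(\lambda_k)$ is pushed strictly deeper in the radical filtration of $T$ than layer $s_k + t$. Extracting the subquotient of $T_k/T_{k-1}$ bracketing the expected and actual positions of this $L(\mu)$ produces a non-trivial filtered extension of some quotient $W$ of $\weyl(\lambda_k)$ by $L(\mu)$; the imposed ordering of the filtration, together with the fact that $L(\mu)$ cannot come from the intrinsic structure of $\weyl(\lambda_k)$ at that layer, forces $\mu > \lambda_k$, yielding a $\weyl$-$L$ subquotient. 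Any shifted quotient of $P(\lambda_k)$ would place $L(\mu)$ at its intrinsic radical layer relative to the head $L(\lambda_k)$, so the filtration discrepancy shows the extracted subquotient is stretched, contradicting the hypothesis. Hence the chosen $\weyl$-filtration is radical-respecting, and applying $\grees$ gives the desired $\grees\weyl$-filtration on $\grees T$.

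The dual argument, applied to a $\dweyl$-filtration of $T$ (with subquotients reading from the top of $T$ down so that $\dweyl(\lambda_k)$ sits in the appropriate socle layer) and invoking the no-stretched-$L$-$\dweyl$-subquotient hypothesis in place of the previous one, produces a $\grees\dweyl$-filtration on $\grees T$. Together with the composition factor bound, $\grees T$ is tilting over $\grees A$.
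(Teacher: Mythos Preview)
Your proposal contains a genuine gap at the crucial step. You assert that if a composition factor $L(\mu)$ of $\weyl(\lambda_k)$ sits deeper in the radical series of $T$ than expected, then extracting a subquotient around it yields a $\weyl$-$L$ subquotient with $\mu > \lambda_k$. But $L(\mu)$ is by hypothesis a composition factor of $\weyl(\lambda_k)$, so $\mu \leq \lambda_k$; the definition of a $\weyl$-$L$ subquotient requires the irreducible $L(\mu)$ below to satisfy $\mu > \lambda$, where $\lambda$ is the Weyl weight of the quotient $W$ on top. Your sentence ``the imposed ordering \dots forces $\mu > \lambda_k$'' is therefore not merely unjustified but contradictory as written. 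Whatever forces $L(\mu)$ down in $T$ must come from \emph{outside} the subquotient $T_k/T_{k-1}$, and the stretched subquotient you need must straddle several terms of the $\weyl$-filtration; you have not constructed such an object. There is a further subtlety: if one defines $s_k$ so that $T_k \subseteq \rad^{s_k} T$ (as the paper's definition of $[\rad_s M : \head \weyl(\lambda)]$ requires), then one checks directly that $(\rad^{s_k+t} T \cap T_k + T_{k-1})/T_{k-1} = \rad^t \weyl(\lambda_k)$ automatically, so any failure of radical-respecting must occur for a different reason than the one you describe.

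The paper's approach is genuinely different: rather than trying to show a given $\weyl$-filtration of $T$ is radical-respecting, it proves $\Extradfilt^1(\weyl(\lambda)\langle r\rangle, T)=0$ for all $\lambda,r$, which by Proposition~\ref{prop:extfilt-extgr} and the quasi-hereditary structure of $\grees A$ forces a $\grees(\dweyl)$-filtration on $\grees T$. The argument lifts an unfiltered boundary $g:P(\lambda)\to T$ of a filtered cycle $f:\Omega(\weyl(\lambda))\to T$ to a filtered one; if the lift fails, the image of $g$ furnishes a stretched $\weyl$-$L$ subquotient, where now the $L(\mu)$ arises from the head of $\Omega(\weyl(\lambda))$ and hence genuinely satisfies $\mu>\lambda$. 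Note also that in the paper the no-stretched-$\weyl$-$L$ hypothesis produces the $\grees(\dweyl)$-filtration, while the dual hypothesis produces the $\grees(\weyl)$-filtration; your assignment of hypotheses to filtrations is reversed.
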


\begin{proof}
Let $\lambda \in \Lambda$ be a weight. Consider a minimal filtered projective resolution for $\weyl(\lambda)$.
\begin{equation*}
\dotsb \rightarrow P_2 \rightarrow P_1 \rightarrow P(\lambda) \rightarrow \weyl(\lambda) \rightarrow 0
\end{equation*}
In particular $P_1$ is the direct sum of $P(\mu)\langle m \rangle$ ranging over $\mu,m$ such that $L(\mu)$ appears in the $m$th radical layer of $P(\lambda)$ and $\Ext^1(\weyl(\lambda),L(\mu)) \neq 0$. For $r \in \ZZ$ we will show that $\Ext^1(\weyl(\lambda),T\langle -r\rangle)=0$. We know that as an unfiltered module $\Ext^1(\weyl(\lambda),T)=0$ because $T$ is a tilting module. Let $f \in \Homradfilt(P_1,T\langle -r\rangle)$ be a non-zero cycle. The cycle $f$ can be viewed as an unfiltered homomorphism $\Omega(\weyl(\lambda)) \rightarrow T$, where 
\begin{equation*}
\Omega(\Delta(\lambda))=\ker(P(\lambda) \rightarrow \weyl(\lambda))
\end{equation*}
By the unfiltered $\Ext$-vanishing condition $f$ is the boundary of some unfiltered boundary $g \in \Hom(P(\lambda),T)$.

We claim that $g$ actually respects the filtrations. First, if $r<0$ there is nothing to prove, as 
\begin{equation*}
g(J^i P(\lambda))=g(\rad^i P(\lambda)) \subseteq \rad^i T \subseteq \rad^{i+r} T=J^i T\langle -r\rangle
\end{equation*}
So suppose $r\geq 0$. Choose $r'\geq r$ maximal such that $f \in \Homradfilt(P_1,T\langle -r'\rangle)$.

Let $M=\im g$ and $N=\im f=\im g|_{\Omega(\weyl(\lambda))}$. The submodule $M$ is a quotient of $P(\lambda)$ and $N$ is a submodule which is a quotient of $\Omega(\weyl(\lambda))$. So $g$ induces a surjective homomorphism between the quotients, as shown in the following diagram.
\begin{equation*}
\xymatrix{
0 \ar[r] & \Omega(\weyl(\lambda)) \ar[r] \ar[d]^{g|_{\Omega(\weyl(\lambda))}} & P(\lambda) \ar[r] \ar[d]^{g} & \weyl(\lambda) \ar[r] \ar[d] & 0 \\
0 \ar[r] & N \ar[r] \ar[d] & M \ar[r] \ar[d] & M/N \ar[r] \ar[d] & 0 \\
&  0 & 0 & 0 &
}
\end{equation*}
Thus $W=M/N$ is a quotient of $\weyl(\lambda)$. Let $0 \leq s \leq r'$ be maximal such that $M \subseteq \rad^s T$. In other words, the image of the head $L(\lambda)$ of $\weyl(\lambda)$ occurs in the $s$th radical layer of $T$. Pick an irreducible $L(\mu)$ appearing in $N/\rad N$ which is lowest in the radical series of $T$ and take a maximal submodule $N' \leq N$ such that $N/N' \iso L(\mu)$. Then $M/N'$ is a $\weyl$-$L$ subquotient of $T$. 

Since $N$ is also the image of $f$, it must be the case that the $L(\mu)$ factor is the head of some summand $P(\mu)\langle m \rangle$ of $P_1$, corresponding to a composition factor in the $m$th radical layer of $P(\lambda)$, with $m$ maximal. So $L(\mu)$ is in the $(r+m')$th radical layer of $T$, for some $m' \geq m$. If $s<r'$, then the filtration length of this subquotient is $r'+m'-s>m$, which is impossible as $m$ was chosen to be maximal and $T$ has no stretched subquotients. So $s=r'$, and thus 
\begin{equation*}
g(J^i P(\lambda))=g(\rad^i P(\lambda))=\rad^i g(P(\lambda)) \subseteq \rad^{r'+i} T \subseteq \rad^{r+i} T=J^i T\langle -r\rangle
\end{equation*}

This shows that $\Extradfilt^1(\weyl(\lambda),T\langle -r\rangle)=0$, so by applying the shift functor we have $\Extradfilt^1(\weyl(\lambda)\langle r \rangle,T)=0$. By Proposition \ref{prop:extfilt-extgr} this means that 
\begin{equation*}
\Extgr^1(\grees \weyl(\lambda)(r),\grees T)=0
\end{equation*}
As $\grees A$ is quasi-hereditary, this shows that $\grees T$ has a $\grees(\dweyl)$-filtration. A similar method shows that $\Extradfilt^1(T,\dweyl(\lambda)\langle r \rangle)=0$ so $\grees T$ also has a $\grees(\weyl)$-filtration, and hence it is a tilting module for $\grees A$.
\end{proof}

In particular when the above situation occurs $\grees T(\lambda)$ is the indecomposable $\grees A$ tilting module corresponding to $\lambda$, because $\grees$ preserves the multiplicities of $\weyl$-filtrations.

Another natural filtration that can be applied to modules is the socle filtration. For an $A$-module $M$, we can define a filtration $J^{\vee}$ by setting $J^{\vee (-i)} M=\soc^i M$ for $i\geq 0$ and $J^{\vee (-i)} M=0$ for $i<0$. It is easy to see that $M$ is a filtered $A$-module in this sense as well. Let $\grees^\vee$ denote the use of the $\grees$ functor using this alternative filtration.

\begin{thm}
\label{thm:rigidtilt}
Suppose $\grees{A}$ is quasi-hereditary. If an indecomposable tilting module $T=T(\lambda)$ for $A$ has no stretched subquotients for either the radical or the socle filtration, then $T$ is rigid. 
\end{thm}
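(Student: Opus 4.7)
The plan is to apply Theorem \ref{thm:greestilt} once for each of the two filtrations on $T$ and then compare the two resulting descriptions of its Loewy structure.

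First I would apply Theorem \ref{thm:greestilt} in the stated form. The hypothesis that $T$ has no stretched subquotients for the radical filtration gives immediately that $\grees T$ is a tilting module for $\grees A$. Pulling the graded $\grees(\weyl)$-filtration of $\grees T$ back to $T$ yields a radical-respecting $\weyl$-filtration of $T$, so by Lemma \ref{lem:allradresp} every $\weyl$-filtration is radical-respecting and formula \eqref{eq:radresploewy} determines the radical layers $[\rad_s T : L(\mu)]$ entirely in terms of the multiplicities $[\rad_t T : \head \weyl(\nu)]$ and the Loewy structures of the standard modules $\weyl(\nu)$.

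Next I would run the dual argument using the socle filtration. This requires a socle version of Proposition \ref{prop:grqhalg} (saying that if the injective modules of $A$ admit socle-respecting $\dweyl$-filtrations then $\grees^\vee A$ is graded quasi-hereditary) and a socle version of Theorem \ref{thm:greestilt}; both follow by formal duality from their radical counterparts, interchanging $\weyl \leftrightarrow \dweyl$, projective $\leftrightarrow$ injective, and radical $\leftrightarrow$ socle. Applied to $T$, the no-stretched-subquotient hypothesis for the socle filtration gives that $\grees^\vee T$ is a tilting module for $\grees^\vee A$ and that $T$ admits a socle-respecting $\dweyl$-filtration. The socle layers $[\soc_s T : L(\mu)]$ then admit an analogous explicit description in terms of $\dweyl$-filtration multiplicities and the Loewy structures of the costandard modules $\dweyl(\nu)$.

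To finish, rigidity amounts to the equalities $[\rad_s T : L(\mu)] = [\soc_{n+1-s} T : L(\mu)]$ for every $s$ and $\mu$, where $n$ is the Loewy length of $T$. Via the two explicit formulas above, this matching reduces to two ingredients: the standard reciprocity $[T(\lambda) : \weyl(\nu)] = [T(\lambda) : \dweyl(\nu)]$ for tilting modules, and a term-by-term matching between the radical layers of $\weyl(\nu)$ and the socle layers of $\dweyl(\nu)$ for each $\nu \leq \lambda$. The main obstacle I anticipate is this auxiliary rigidity of the standard and costandard building blocks. The natural route is induction on the poset $\Lambda$: for $\nu$ minimal one has $\weyl(\nu) = \dweyl(\nu) = L(\nu)$, which is tautologically rigid, and for $\nu \leq \lambda$ the same no-stretched-subquotient analysis (inherited from $T$) can be applied to $\weyl(\nu)$ and $\dweyl(\nu)$ to propagate rigidity upward before assembling the final count for $T$.
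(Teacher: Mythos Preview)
Your argument misses the key step that makes the paper's proof short, and the detour you propose does not close the gap. In the paper, one observes that \emph{both} $\grees T$ and $\grees^\vee T$ are indecomposable tilting modules for the \emph{same} graded quasi-hereditary algebra $\grees A$, both of highest weight $\lambda$. Uniqueness of indecomposable tilting modules (up to grade shift) then forces $\grees T \cong \grees^\vee T(r)$ for some $r$, and since the gradings record the radical and socle layers of $T$, rigidity follows immediately. Note that the socle filtration on $T$ is still compatible with the \emph{radical} filtration on $A$ (because $\rad^i A \cdot \soc^j M \subseteq \soc^{j-i} M$), so there is no need to introduce a separate algebra $\grees^\vee A$; indeed, no hypothesis on such an algebra is available, and the socle series of $A$ need not even give an algebra filtration.

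The substitute you propose---matching the two explicit Loewy formulas term by term---requires as an auxiliary input that each $\weyl(\nu)$ and $\dweyl(\nu)$ with $\nu \leq \lambda$ is itself rigid (so that radical layers of $\weyl(\nu)$ match socle layers of $\dweyl(\nu)$). This is not part of the hypotheses, and your inductive scheme does not establish it: the no-stretched-subquotient condition is assumed only for $T$, and there is no mechanism by which $\weyl(\nu)$ ``inherits'' it. In general standard modules for quasi-hereditary algebras need not be rigid, so this step genuinely fails. The fix is exactly the uniqueness-of-tilting argument above, which bypasses any layer-by-layer comparison.
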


\begin{proof}
Suppose $T=T(\lambda)$ is an indecomposable tilting module for $A$. If $T$ has no stretched subquotients, then by applying Theorem \ref{thm:greestilt} we know that $\grees T$ and $\grees^\vee T$ are both tilting modules for $\grees A$ corresponding to $\lambda$. But in a graded quasi-hereditary algebra there is only one such tilting module up to isomorphism and grade shifting. Since the gradings of $\grees T$ and $\grees^\vee T$ correspond to the radical and socle layers of $T$, this shows that $T$ has identical radical and socle layers.
\end{proof}

There is a partial converse to the above theorem.

\begin{cor}
\label{cor:rigidtiltconv}
Suppose $\grees A$ is quasi-hereditary. If $T=T(\lambda)$ is a rigid indecomposable tilting module for $A$ with radical-respecting $\weyl$- and $\dweyl$-filtrations, then $T$ has no stretched subquotients.
\end{cor}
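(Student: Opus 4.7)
The plan is to prove the contrapositive: assuming $T$ is rigid with radical-respecting filtrations, I would derive a contradiction from the presence of a stretched subquotient. The first step is to establish that under the hypotheses of the corollary, $\grees T$ is in fact an indecomposable tilting module for $\grees A$. Having a radical-respecting $\weyl$-filtration of $T$ passes directly to a $\grees(\weyl)$-filtration of $\grees T$: each filtration subquotient $M_k/M_{k-1}$, viewed as a filtered $A$-module, is isomorphic to $\weyl(\lambda_k)\langle s_k\rangle$ (where $s_k$ is the radical depth of the head $L(\lambda_k)$), so $\grees$ sends it to $\grees(\weyl(\lambda_k))(s_k)$, and these pieces fit together to give the desired filtration on $\grees T$. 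The analogous construction, applied to the radical-respecting $\dweyl$-filtration and combined with rigidity (which equates the radical and socle filtrations up to reversal and therefore identifies $\grees T$ with $\grees^\vee T$ up to grading shift), yields a $\grees(\dweyl)$-filtration on $\grees T$. By Ringel's characterization $\grees T$ is tilting for $\grees A$, and indecomposability of $T$ forces $\grees T \iso T^{gr}(\lambda)$, the graded indecomposable tilting module at weight $\lambda$.

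Next, I would suppose for a contradiction that $T$ possesses a stretched $\weyl$-$L$ subquotient $M'/M''$, the $L$-$\dweyl$ case being entirely dual. The idea is to reverse the construction in the proof of Theorem \ref{thm:greestilt}: the failure of $M'/M''$ to realize as a shifted filtered quotient of $P(\lambda)$ manifests as an unfiltered homomorphism $g : P(\lambda) \to T\langle -r\rangle$ (with $r$ the radical depth of the head $L(\lambda)$ of $W$) whose restriction to the syzygy $\Omega(\weyl(\lambda))$ is a filtered cycle, but which itself does not satisfy the filtration shift $g(J^i P(\lambda)) \subseteq J^i T\langle -r\rangle$, precisely because the stretched $L(\mu)$ sits at radical depth $r+k$ with $k$ strictly exceeding its prescribed $P(\lambda)$-depth $m$. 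This furnishes a nonzero class in $\Extradfilt^1(\weyl(\lambda), T\langle -r\rangle)$, and by Proposition \ref{prop:extfilt-extgr} a nonzero class in $\Extgr^1(\grees\weyl(\lambda)(r), \grees T)$, contradicting the $\grees(\dweyl)$-filtration on $\grees T$ established in the first step.

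The main obstacle is the second step: extracting a nonvanishing filtered Ext class from the existence of a stretched subquotient is the structural reverse of the forward direction in Theorem \ref{thm:greestilt}, and the precise bookkeeping of filtration shifts relative to the depth of the stretched $L(\mu)$ requires care. A secondary subtlety lies in the first step, in making the passage from the dual radical-respecting $\dweyl$-filtration (via rigidity) to a genuine $\grees(\dweyl)$-filtration on $\grees T$ fully rigorous, since it requires identifying the gradings produced by the radical and socle filtrations coherently across all subquotients.
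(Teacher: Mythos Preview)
Your approach is essentially the same as the paper's: first argue that $\grees T$ is tilting for $\grees A$ using the radical-respecting filtrations (the paper cites the proof of Proposition~\ref{prop:grqhalg} for this), and then observe that a stretched subquotient would produce a nonvanishing $\Extradfilt^1(\weyl(\lambda)\langle r\rangle,T)$ or $\Extradfilt^1(T,\dweyl(\lambda)\langle r\rangle)$ by reversing the argument of Theorem~\ref{thm:greestilt}, contradicting the tilting property just established.

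One minor point: your detour through rigidity and $\grees^\vee T$ in the first step is unnecessary. The hypothesis already provides a \emph{radical}-respecting $\dweyl$-filtration, so the same mechanism from the proof of Proposition~\ref{prop:grqhalg} applies directly to yield a $\grees(\dweyl)$-filtration on $\grees T$, without passing through the socle filtration. The paper's proof does not invoke rigidity explicitly at all in this step.
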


\begin{proof}
From the proof of Proposition \ref{prop:grqhalg} $\grees{T}$ has $\grees(\weyl)$- and $\grees(\dweyl)$-fil\-tra\-tions. So $\grees{T}$ is a tilting module, and from the proof of Theorem \ref{thm:greestilt} any stretched subquotients would give rise to a non-vanishing $\Ext^1(\weyl(\lambda)\langle r\rangle,T)$ or $\Ext^1(T,\dweyl(\lambda)\langle r\rangle)$.
\end{proof}

\subsection{Duality of stretched subquotients}

The hypotheses of Theorems \ref{thm:greestilt} and \ref{thm:rigidtilt} are rather difficult to check in all but the most basic cases. In many applications $A$ has additional properties which can reduce this checking significantly.

\begin{cor}
\label{cor:greestiltdual}
Suppose $\grees A$ is quasi-hereditary. Let $T$ be a tilting module for $A$. If $T$ has a radical-respecting $\weyl$-filtration and has no stretched $\weyl$-$L$ subquotients, then $\grees T$ is a tilting module for $\grees A$.
\end{cor}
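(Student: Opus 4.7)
The plan is to verify the two tilting conditions for $\grees T$ separately, using different halves of the hypothesis. The $\grees(\weyl)$-filtration will come from a direct construction parallel to Proposition \ref{prop:grqhalg}, while the $\grees(\dweyl)$-filtration will come from an Ext-vanishing argument parallel to Theorem \ref{thm:greestilt}.

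First I would produce the $\grees(\weyl)$-filtration directly. Let $0 = T_0 < T_1 < \dotsb < T_n = T$ be a radical-respecting $\weyl$-filtration, with $T_k/T_{k-1} \iso \weyl(\mu_k)$. The radical-respecting condition says precisely that there is an integer $s_k$ for which the quotient map $T_k \rightarrow \weyl(\mu_k)$ extends to $\rad^{s_k} T \rightarrow \weyl(\mu_k)$ and induces filtered isomorphisms on every radical layer. Equivalently, $T_k/T_{k-1} \iso \weyl(\mu_k)\langle s_k\rangle$ as filtered modules, where the filtration on the left-hand side is the subquotient filtration inherited from $T$. Applying $\grees$ to the original chain yields $0 = \grees T_0 < \grees T_1 < \dotsb < \grees T_n = \grees T$ with
\begin{equation*}
\grees T_k / \grees T_{k-1} \iso \grees(T_k/T_{k-1}) \iso \grees\weyl(\mu_k)(s_k),
\end{equation*}
which is exactly a $\grees(\weyl)$-filtration.

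Second I would establish the $\grees(\dweyl)$-filtration. Inspecting the proof of Theorem \ref{thm:greestilt}, the stretched $\weyl$-$L$ hypothesis alone is enough to yield $\Extradfilt^1(\weyl(\lambda)\langle r\rangle, T) = 0$ for all $\lambda \in \Lambda$ and $r \in \ZZ$; the $L$-$\dweyl$ hypothesis is invoked only separately to obtain the dual Ext-vanishing. So the first half of that argument applies verbatim. By Proposition \ref{prop:extfilt-extgr} this translates to $\Extgr^1(\grees\weyl(\lambda)(r), \grees T) = 0$ for all $\lambda$ and $r$, and since $\grees A$ is quasi-hereditary this forces $\grees T$ to have a $\grees(\dweyl)$-filtration. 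Combining both steps, $\grees T$ is a tilting module for $\grees A$. No serious obstacle is anticipated, since the corollary essentially decouples the two halves of Theorem \ref{thm:greestilt}: the radical-respecting hypothesis supplies the $\grees(\weyl)$-filtration concretely, replacing the need for a stretched $L$-$\dweyl$ assumption. The one subtle point is confirming that the subquotient filtrations on $T_k/T_{k-1}$ coincide with shifted copies of the radical filtration on $\weyl(\mu_k)$, but this is exactly what radical-respecting encodes (cf.\ Lemma \ref{lem:allradresp}).
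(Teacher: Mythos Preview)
Your proposal is correct and follows essentially the same approach as the paper: the $\grees(\weyl)$-filtration is obtained from the radical-respecting hypothesis by the construction in the proof of Proposition~\ref{prop:grqhalg}, and the $\grees(\dweyl)$-filtration is obtained from the stretched $\weyl$-$L$ hypothesis via the first half of the argument in Theorem~\ref{thm:greestilt}. The paper's proof is just a two-line citation of those two results, whereas you have spelled out the details, but the strategy is identical.
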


\begin{proof}
From the proof of Theorem \ref{thm:greestilt}, $\grees T$ has a $\grees(\dweyl)$-filtration. From the proof of Proposition \ref{prop:grqhalg}, $\grees T$ also has a $\grees(\weyl)$-filtration. Therefore $\grees T$ is tilting.
\end{proof}

The easiest way to show that $T$ has a radical-respecting $\weyl$-filtration is to show that $T$ has simple socle. For then $\head T \iso L(\lambda)$ for some $\lambda$, so $T$ is a quotient $P(\lambda)/U$ of $P(\lambda)$, which we assume already has a radical-respecting $\weyl$-filtration. As $T$ has a $\weyl$-filtration so does $U$ \cite[Theorem 3]{ringel}. Thus $\weyl$-filtrations of $T$ and $U$ give a $\weyl$-filtration of $P(\lambda)$, which is radical-respecting by Lemma \ref{lem:allradresp}. But the radical series of $T$ does not change from that of $P(\lambda)$, so $T$ also has a radical-respecting $\weyl$-filtration.

Another way to reduce the number of cases to check is to use duality. A duality functor on $\modcat{A}$ is a contravariant, additive, $K$-linear, exact functor $\delta:\modcat{A} \rightarrow \modcat{A}$ such that $\delta \circ \delta$ is naturally isomorphic to the identity. A BGG algebra is a quasi-hereditary algebra $A$ equipped with a duality functor $\delta$ which fixes irreducibles, i.e.~$\delta(L(\lambda)) \iso L(\lambda)$ for all $\lambda \in \Lambda$. In a BGG algebra we have $\delta(P(\lambda)) \iso I(\lambda)$ and $\delta(\weyl(\lambda)) \iso \dweyl(\lambda)$.

\begin{cor}
\label{cor:rigidtiltdual}
Suppose $A$ is a BGG algebra and $\grees A$ is quasi-hereditary. If $T=T(\lambda)$ is an indecomposable tilting module for $A$ such that $\grees T$ is a tilting module for $\grees A$ then $T$ is rigid.
\end{cor}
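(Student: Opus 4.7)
The plan is to use BGG duality to upgrade the hypothesis and then apply Theorem \ref{thm:rigidtilt}. Since $A$ is a BGG algebra, the duality $\delta$ fixes simples, interchanges $\weyl(\lambda)$ with $\dweyl(\lambda)$ and $P(\lambda)$ with $I(\lambda)$, and therefore sends tilting modules to tilting modules; uniqueness of indecomposable tiltings at each weight forces $\delta T(\lambda) \iso T(\lambda)$.

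The crucial point is that $\delta$ interchanges the radical and socle filtrations functorially: exactness of $\delta$ together with $\delta L \iso L$ yields $\soc^i(\delta M) \iso \delta(M/\rad^i M)$ and dually $\rad^i(\delta M) \iso \delta(M/\soc^i M)$ for any finite-dimensional $M$. Hence $\delta$ lifts to a contravariant equivalence between $(A,J)$-filtered modules (radical filtration) and $(A,J^\vee)$-filtered modules (socle filtration). By Proposition \ref{prop:grqhalg}, the hypothesis that $\grees A$ is graded quasi-hereditary means the projectives $P(\lambda)$ admit radical-respecting $\weyl$-filtrations; transporting across $\delta$ gives socle-respecting $\dweyl$-filtrations of the injectives $I(\lambda)$, so $\grees^\vee A$ is also graded quasi-hereditary (by the socle analog of Proposition \ref{prop:grqhalg}).

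This filtered equivalence descends via the Rees construction to a contravariant graded equivalence between $\grmodcat{\grees A}$ and $\grmodcat{\grees^\vee A}$ which sends $\grees M \mapsto \grees^\vee(\delta M)$, interchanges the classes of modules with $\weyl$- and $\dweyl$-filtrations, and therefore carries tilting modules to tilting modules. Applying it to $\grees T$: because $T \iso \delta T$, the image is $\grees^\vee(\delta T) \iso \grees^\vee T$, so this module must be tilting for $\grees^\vee A$. The proof of Theorem \ref{thm:rigidtilt} then applies: both $\grees T$ and $\grees^\vee T$ are the unique (up to grade shift) indecomposable tilting at $\lambda$ in their respective graded quasi-hereditary algebras, and their gradings encode the radical and socle filtrations of $T$; uniqueness forces these filtrations to agree, so $T$ is rigid.

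The hard part will be rigorously constructing the graded duality: one must check that the filtered lift of $\delta$ descends cleanly to the Rees level and that the two graded algebras $\grees A$ and $\grees^\vee A$ can be meaningfully compared, so that ``tilting module at $\lambda$'' has the same meaning on both sides and the uniqueness argument from Theorem \ref{thm:rigidtilt} goes through.
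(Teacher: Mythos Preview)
Your overall strategy matches the paper's: use BGG duality $\delta$ (with $\delta T \iso T$) to transport the tilting property from the radical side to the socle side, then invoke the uniqueness argument of Theorem~\ref{thm:rigidtilt}. But you introduce an unnecessary second algebra $\grees^\vee A$, and the difficulty you flag at the end---comparing tilting modules across two different graded quasi-hereditary algebras so that uniqueness can be applied---is a genuine gap that you do not close.

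The resolution is that no second algebra is needed. The socle filtration $J^{\vee}$ on a module $M$ is compatible with the \emph{radical} filtration on $A$ (because $J(A)\cdot\soc^k M \subseteq \soc^{k-1} M$), so $\grees^\vee M$ is already a graded $\grees A$-module. Thus $\grees T$ and $\grees^\vee T$ both live in $\grmodcat{\grees A}$, which is precisely the setting of Theorem~\ref{thm:rigidtilt}. (Your proposed $\grees^\vee A$ is in fact ill-formed in the paper's framework: with $J^{\vee 0} A=\soc^0 A=0$ the unit does not lie in filtration degree~$0$, so the filtered-algebra axioms fail.)

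Once this is observed, the paper's argument is short. From $\grees T$ being tilting for $\grees A$ one reads off that $T$ has radical-respecting $\weyl$- and $\dweyl$-filtrations (a $\grees(\weyl)$- or $\grees(\dweyl)$-filtration of $\grees T$ descends to one). Applying $\delta$ converts these into socle-respecting $\dweyl$- and $\weyl$-filtrations of $\delta T \iso T$; then, by the argument of Proposition~\ref{prop:grqhalg} run with the socle filtration on the module side, $\grees^\vee T$ is tilting for $\grees A$. Now the proof of Theorem~\ref{thm:rigidtilt} applies verbatim.
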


\begin{proof}
If $\grees T$ is a tilting module for $\grees A$, then $T$ has radical-respecting $\weyl$- and $\dweyl$-filtrations. Thus $\delta(T)$ has socle-respecting-respecting $\dweyl$- and $\weyl$-filtrations, so $\grees^\vee \delta(T)$ is also an indecomposable tilting module for $\grees A$. Yet $\delta(T) \iso T$, so $\grees^\vee \delta(T) \iso \grees^\vee T$. Proceed as in the proof of Theorem \ref{thm:rigidtilt}.
\end{proof}

Finally, there is a slightly simpler version of Corollary \ref{cor:rigidtiltconv} in the case of a BGG algebra.

\begin{cor}
Suppose $A$ is a BGG algebra and $\grees A$ is quasi-hereditary. If $T=T(\lambda)$ is a rigid indecomposable tilting module for $A$ with radical-respecting $\weyl$-filtration, then $T$ has no stretched subquotients.
\end{cor}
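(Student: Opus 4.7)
The plan is to reduce this corollary to Corollary \ref{cor:rigidtiltconv}, which already has the same conclusion but under the stronger hypothesis that $T$ possesses both a radical-respecting $\weyl$-filtration \emph{and} a radical-respecting $\dweyl$-filtration. Thus the whole task is to manufacture a radical-respecting $\dweyl$-filtration of $T$ from the given radical-respecting $\weyl$-filtration, using the BGG duality $\delta$ together with rigidity. The other hypotheses ($\grees A$ quasi-hereditary, $T = T(\lambda)$ rigid and indecomposable tilting) pass through unchanged.

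First I would apply $\delta$ to the given data. Since $A$ is BGG, $\delta$ is exact, contravariant, fixes irreducibles, and satisfies $\delta(\weyl(\lambda)) \iso \dweyl(\lambda)$, $\delta(P(\lambda))\iso I(\lambda)$. These identifications show that $\delta$ carries the characterizing properties of the indecomposable tilting module $T(\lambda)$ to themselves, so $\delta(T) \iso T$. Dualizing the radical-respecting $\weyl$-filtration $0 = M_0 < M_1 < \dotsb < M_n = T$ then yields the filtration
\begin{equation*}
0 < \delta(T/M_{n-1}) < \dotsb < \delta(T/M_0) \iso T,
\end{equation*}
whose successive subquotients are $\delta(\weyl(\lambda_k)) \iso \dweyl(\lambda_k)$. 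This is a $\dweyl$-filtration of $T$.

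Next I would argue that this dual filtration is radical-respecting. Since $\delta$ is exact and fixes irreducibles, it interchanges the radical and socle filtrations: $\delta(\rad^s M) = \ker\bigl(\delta(M) \twoheadrightarrow \delta(\soc^{?}\cdot)\bigr)$ unwinds to the statement that $\delta$ carries $\rad^s T$ to $T/\soc^s T$ under the identification $\delta(T)\iso T$. Consequently the radical-respecting property of the original $\weyl$-filtration (i.e.\ that the extension maps $\rad^s T \to \weyl(\lambda_k)$ are filtered isomorphisms onto their images) translates into a socle-respecting property for the dual $\dweyl$-filtration (i.e.\ the embeddings $\dweyl(\lambda_k)\hookrightarrow T/\soc^s T$ are filtered isomorphisms onto their images). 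Here the rigidity of $T$ is decisive: because $\rad^i T = \soc^{\ell - i} T$ for all $i$, where $\ell$ is the Loewy length, a socle-respecting $\dweyl$-filtration is automatically radical-respecting.

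Having produced the missing radical-respecting $\dweyl$-filtration, all the hypotheses of Corollary \ref{cor:rigidtiltconv} hold, and the conclusion that $T$ has no stretched subquotients follows. The main technical step, and the only point that is not formal, is verifying that the radical-respecting condition really does dualize correctly under $\delta$: one must unwind the definition, use exactness of $\delta$ to identify $\delta(\rad^s T \cap M_k + M_{k-1})$ with the corresponding subquotient in $T/\soc^s T$, and then invoke rigidity to reinterpret the socle indexing as radical indexing. This is essentially bookkeeping, but it is where the proof does its work.
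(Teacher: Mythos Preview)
Your proposal is correct and follows essentially the same route as the paper: dualize the radical-respecting $\weyl$-filtration via $\delta$ to obtain a socle-respecting $\dweyl$-filtration of $\delta(T)\iso T$, invoke rigidity to reinterpret it as radical-respecting, and then apply Corollary~\ref{cor:rigidtiltconv}. The paper's proof is a terse three-line version of exactly this argument; your additional unpacking of why duality interchanges radical and socle filtrations is the natural elaboration.
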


\begin{proof}
By duality $\delta(T) \iso T$ has a socle-respecting $\dweyl$-filtration. Yet $T$ is rigid, so $T$ actually has a radical-respecting $\dweyl$-filtration. Now use Corollary \ref{cor:rigidtiltconv}.
\end{proof}

\section{Eliminating stretched subquotients}

Finding and eliminating possible stretched subquotients in a module is in general extremely difficult. In addition to calculating the radical series of a module, one must also know enough about the submodule structure to figure out which subquotients exist. We describe some techniques for doing this, which we apply in the next section.

\subsection{Coefficient quivers}

Tilting modules corresponding to high weights tend to have complicated structure, with several composition factors interacting in intricate ways. One common method to depict the structure of a finite-length module is to use Alperin diagrams \cite{alperin}. However, often the necessary axioms for Alperin diagrams described in \cite{benson-carlson} do not hold in practice. As a result, the approach in the Appendix of \cite{bdm-small} using coefficient quivers must be used instead. Coefficient quivers can be viewed as a generalization of Alperin diagrams which always exist.

\begin{defn}
Let $Q=(Q_0,Q_1,s,t)$ be a quiver, and let $X=(X_i)_{i \in Q_0}$ be a representation of $Q$ over a field $K$. Suppose $\mathcal{B}$ is a basis for $X$ as a quiver representation, i.e.~$\mathcal{B}$ is a union of bases for each vector space $X_i$. The coefficient quiver of $X$ with respect to $\mathcal{B}$ is denoted $\Gamma(X,\mathcal{B})$. It has vertices indexed by $\mathcal{B}$. For $b \in \mathcal{B} \cap X_i$, $b' \in \mathcal{B} \cap X_j$ there is an arrow $b \rightarrow b'$ in $\Gamma(X,\mathcal{B})$ if and only if there is an arrow $\rho:i \rightarrow j$ such that the corresponding matrix entry $(X_\rho)_{bb'}$ is non-zero.
\end{defn}

Drawing a coefficient quiver can be thought of as ``unlacing'' the representation $X$ into its $1$-di\-men\-sion\-al irreducible composition factors. For a general module $M$ over some finite-di\-men\-sion\-al algebra $A$, Gabriel's theorem \cite[Proposition 4.1.7]{benson} is used to replace $A$ with a Morita equivalent quotient of $KQ$, where $Q$ is the $\Ext$-quiver of $A$. Thus the coefficient quiver of $M$ depends on the particular quotient and on the chosen basis. Like Alperin diagrams, coefficient quivers are conventionally drawn such that all arrows point downwards so that the arrowheads may be omitted. Another convention is that if $\Lambda$ is a labelling set for irreducibles $L(\lambda)$, we write $\lambda$ instead of $L(\lambda)$ in the coefficient quiver.

Arrow-closed subsets of a coefficient quiver $\Gamma$ for $M$ give submodules of $M$, and their complements give quotients. This describes much (but not all) of the submodule/quotient structure of $M$. For other submodules $M' \leq M$, it will be useful to describe which composition factors in $\Gamma$ correspond to composition factors of $M'$. Recall from linear algebra that we say a vector $v$ involves a basis vector $b$ if when $v$ is written as a linear combination of basis vectors, the coefficient corresponding to $b$ is non-zero. Since vertices of the coefficient quiver correspond to basis elements, we will say that a submodule $M'$ of $M$ involves a certain composition factor in $\Gamma$ if $M'$ contains a vector which involves the corresponding basis vector.

An Alperin diagram is called ``strong'' if both the radical series and the socle series can be calculated from the diagram \cite{alperin}. This concept can be extended to coefficient quivers as well. Although there exist modules which do not have strong coefficient quivers (e.g.~$T(4,3)$ in \cite[Appendix]{bdm-small}), for every module $M$ there exists a coefficient quiver which accurately depicts the radical series. In fact, for any subquotient there exists a coefficient quiver which will accurately depict the subquotient's radical series.

Stretched subquotients by necessity require ``stretched'' arrows connecting composition factors more than one radical layer apart. In most examples it will be impossible to draw a full coefficient quiver for a module. However, even knowing that certain arrows exist can be extremely helpful for eliminating stretched subquotients within tilting modules. We distinguish between two different kinds of arrows in a coefficient quiver:
\begin{itemize}
\item Solid lines ($\xymatrix@1{\lambda \ar@{-}[r] & \mu}$) denote arrows which definitely exist for the chosen basis.


\item Dotted lines ($\xymatrix@1{\lambda \ar@{.}[r] & \mu}$) denote arrows which may exist given certain values of the representing matrices $X_\rho$.
\end{itemize}


The following lemma shows that in many cases this requires multiple copies of a composition factor.

\begin{lem}
\label{lem:stretched-repeated-factors}
Let $M$ be a module with a radical-depicting coefficient quiver $\Gamma$. Suppose $\mu>\lambda$ are weights such that $L(\mu) \leq \rad_1 P(\lambda)$. Suppose further that some copy of $L(\lambda)$ in $M$ connects downward in $\Gamma$ to some factor $L(\lambda')$ which subsequently connects downward to a factor $L(\mu)$ with $\lambda'\nless \lambda$. Then $L(\lambda)$ is not involved in a stretched subquotient with this copy of $L(\mu)$ unless there is another copy of $L(\lambda')$ which connects downward from $L(\lambda)$ and downward to $L(\mu)$ or there is another copy of $L(\lambda)$ (possibly connected to $L(\mu)$) which connects downward to $L(\lambda')$.
\begin{equation*}
\xymatrix{
\lambda \ar@{-}[d] \ar@{-}[ddr] & \cdot & & \lambda \ar@{-}[d] \ar@{-}[dr] \ar@{-}[ddr] & \cdot & & \lambda \ar@{-}[d] \ar@{-}[ddr] & \lambda \ar@{-}[dl] \ar@{.}[dd] \\
\lambda' \ar@{-}[dr] & \cdot & \Longrightarrow & \lambda' \ar@{-}[dr] & \lambda' \ar@{-}[d] & \text{or} & \lambda'\ar@{-}[dr] & \cdot \\
\cdot & \mu & & \cdot & \mu & & \cdot & \mu
}
\end{equation*}
\end{lem}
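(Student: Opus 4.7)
The plan is to argue by contradiction. Suppose there exists a stretched $\weyl$-$L$ subquotient $M'/M''$ of $M$ in which this copy of $L(\lambda)$ appears in the head and this copy of $L(\mu)$ appears in the socle --- concretely, some $\tilde v \in M'$ that involves $v_\lambda$ has image in $M'/M''$ generating the head $L(\lambda)$, while some vector involving $v_\mu$ survives in the socle of $M'/M''$. The goal is to force one of the two additional structures in the conclusion. The key structural constraint I would exploit is that the composition factors of $M'/M''$ lie in $\{L(\nu): \nu\le\lambda\}\cup\{L(\mu)\}$, because $M'/M''$ is a non-trivial extension of a quotient of $\weyl(\lambda)$ by $L(\mu)$. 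In particular, since $\lambda'\nless\lambda$, the factor $L(\lambda')$ cannot occur in $M'/M''$ (the edge case $\lambda'=\lambda$ requires a small adjustment that does not change the substance of the argument).

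My next move would be to trace how the factor $L(\lambda')$, which manifestly appears in the cyclic submodule $A\tilde v \subseteq M'$ via the arrow $v_\lambda \to v_{\lambda'}$, is eliminated on passing to $M'/M''$. Write $\tilde v = cv_\lambda + w$ with $c\ne 0$, and choose $\rho_1 \in J(A)$ realising that arrow; because $\rho_1$ shifts the radical degree by at least one, only the depth-zero contribution to $\tilde v$ can produce a $v_{\lambda'}$-coefficient in $\rho_1\tilde v$. The requirement that $\tilde v+M''$ generate a simple head of type $L(\lambda)$ in $M'/M''$ forces every depth-zero basis vector appearing in $\tilde v$ to be of type $L(\lambda)$, for otherwise the head of $A\tilde v$ would acquire an extra irreducible summand that cannot be absorbed into $M''$ without simultaneously killing the $L(\lambda)$-head.

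At this point I split into two sub-cases. First, if the $v_{\lambda'}$-coefficient in $\rho_1\tilde v$ vanishes, the cancellation must come from a second depth-zero basis vector of type $L(\lambda)$ occurring in $\tilde v$ whose $\rho_1$-image also involves $v_{\lambda'}$ --- this is exactly the hypothesis of case 2 (and the ``possibly connected to $L(\mu)$'' clause then supplies the path by which $v_\mu$ remains reachable in the modified cyclic submodule). Second, if this coefficient is non-zero, then $A\tilde v$ genuinely involves $v_{\lambda'}$, so $M''$ must contain a vector involving $v_{\lambda'}$ with non-zero coefficient. A scalar multiple of $v_{\lambda'}$ alone will not suffice, for $M''$ is closed under $A$-action and $\rho_2 v_{\lambda'}$ involves $v_\mu$, which would place $v_\mu$ in $M''$ contrary to assumption. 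Hence $M''$ must contain a combination $v_{\lambda'} - \beta v_{\lambda''}$ for a second basis vector $v_{\lambda''}$ of type $L(\lambda')$; cancellation of the $v_\mu$-component of $\rho_2(v_{\lambda'}-\beta v_{\lambda''})$ forces $v_{\lambda''}$ to connect downward to this very copy of $v_\mu$, and membership of $v_{\lambda''}$ in $M'=A\tilde v$ (together with the failure of case 2, ruling out alternative $L(\lambda)$-generators) forces $v_{\lambda''}$ to be reached downward from $v_\lambda$. This is exactly the hypothesis of case 1.

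The part I expect to be hardest is ruling out exotic configurations in the non-vanishing sub-case, where $M''$ might be generated by several combinations at once, or where longer paths through other basis vectors conspire to eliminate $v_{\lambda'}$ without the canonical case-1 structure. The essential tool for controlling this is the radical-depicting property of $\Gamma$, which guarantees that leading-order behaviour under $\rho_1$ and $\rho_2$ is faithfully recorded by the displayed arrows, so that all the coefficient calculations at depth one reduce to the arrow data and not to deeper radical corrections. With this in hand one can systematically reduce every alleged cancellation to either a second downward arrow from $v_\lambda$ to a duplicate $L(\lambda')$ (case 1) or a second depth-zero $L(\lambda)$-vertex mapping to $v_{\lambda'}$ (case 2).
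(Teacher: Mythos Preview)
Your argument captures the same essential idea as the paper's: the factor $L(\lambda')$ cannot survive in the subquotient, so its elimination must be witnessed either by a second $L(\lambda)$-vertex mapping onto $v_{\lambda'}$ (your vanishing sub-case, the paper's ``exactly one $L(\lambda')$'' case) or by a second $L(\lambda')$-vertex absorbing the $v_\mu$-contribution (your non-vanishing sub-case, the paper's ``two $L(\lambda')$'' case). So the overall strategy is the same.

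Where the paper differs is in replacing your coefficient-chasing with a single change of basis. Rather than tracking $\rho_1\tilde v$ and then arguing about combinations in $M''$, the paper first asks whether $L(\lambda)$ connects to more than one copy of $L(\lambda')$ in $\Gamma$; if so, it rechooses the $L(\lambda')$-basis so that $L(\lambda)$ connects to exactly one. In that new coefficient quiver the analysis collapses: if the unique $L(\lambda')$ below $L(\lambda)$ still connects to $L(\mu)$, there is no way to quotient it out while retaining $L(\mu)$, so the stretched subquotient is impossible; hence in the original basis both copies of $L(\lambda')$ must have connected to $L(\mu)$, which is exactly case~1. If instead $L(\lambda)$ connects to only one $L(\lambda')$ from the outset, one reads off case~2 directly. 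This is precisely the mechanism that dissolves the ``exotic configurations'' you flag as the hard part: the basis change makes all cancellations visible at the level of arrows rather than hidden in linear combinations.

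One genuine imprecision in your write-up: you assert $M'=A\tilde v$, but in general only $M'=A\tilde v + M''$ holds, so ``membership of $v_{\lambda''}$ in $M'$'' does not immediately give an arrow $v_\lambda\to v_{\lambda''}$. The conclusion is still recoverable --- if $v_{\lambda''}$ were not reached from $v_\lambda$, then $M'$ would involve both $v_{\lambda'}$ and $v_{\lambda''}$ independently while $M''$ involves only their combination, forcing $[M'/M'':L(\lambda')]>0$ --- but this extra step is exactly the kind of bookkeeping the paper's basis-change manoeuvre lets you avoid.
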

\begin{proof}
As $\lambda' \nless \lambda$, there is no composition factor $L(\lambda')$ within $\weyl(\lambda)$. If the given copy of $L(\lambda)$ connects to two copies of $L(\lambda')$, then we can change the basis for the $L(\lambda')$ vectors so that $L(\lambda)$ connects to one copy of $L(\lambda')$. In other words, we draw a new coefficient quiver
\begin{equation*}
\xymatrix{
\lambda \ar@{-}[d] \ar@{-}[ddr] & \cdot \\
\lambda' \ar@{.}[dr] & \lambda' \ar@{-}[d] \\
\cdot & \mu }
\end{equation*}
If both copies of $L(\lambda')$ connect downward to $L(\mu)$, then the proposed stretched subquotient is impossible. Thus the dotted arrow must not exist, so in particular in the original coefficient quiver both copies of $L(\lambda')$ must connect to $L(\mu)$, giving the first case.

Now assume that $L(\lambda)$ connects to exactly one copy of $L(\lambda')$ which connects to $L(\mu)$. This copy of $L(\lambda)$ alone cannot be the head of a stretched subquotient, because there is no way to quotient out $L(\lambda')$ without losing $L(\mu)$ as well. So there must be another copy of $L(\lambda)$ connected to $L(\lambda')$, giving the second case.
\end{proof}

\subsection{Calculating Loewy series}

The following results of Bowman and Martin on BGG algebras are extremely useful for calculating the radical series of projective modules. They will be used frequently in the following section.

\begin{prop}[{\cite[Theorem 6]{bowman-martin}}]
\label{prop:bgg-proj-recip}
Let $A$ be a BGG algebra with poset $\Lambda$. For $\lambda,\mu \in \Lambda$ we have the following reciprocity:
\begin{equation*}
[\rad_s P(\mu):L(\lambda)]=[\rad_s P(\lambda):L(\mu)]
\end{equation*}
\end{prop}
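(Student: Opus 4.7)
The plan is to use the BGG duality $\delta$ to translate the radical series of $P(\mu)$ into the socle series of $I(\mu)$, and then to identify the two multiplicities in the statement via standard $\Hom$ computations. I would first verify by induction on $s$ that $\delta(\rad^s P(\mu)) \iso I(\mu)/\soc^s I(\mu)$. The base case $s=0$ is $\delta P(\mu) \iso I(\mu)$; for the inductive step, apply the exact contravariant $\delta$ to the sequence $0 \to \rad^{s+1} P(\mu) \to \rad^s P(\mu) \to \rad_s P(\mu) \to 0$, noting that the semisimple head $\rad_s P(\mu)$ is sent to $\soc(I(\mu)/\soc^s I(\mu)) = \soc^{s+1} I(\mu)/\soc^s I(\mu)$. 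Since $\delta$ fixes simples, taking associated graded layers gives
\begin{equation*}
[\rad_s P(\mu) : L(\lambda)] = [\soc_s I(\mu) : L(\lambda)].
\end{equation*}

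Next I would establish the identification
\begin{equation*}
\Hom_A(P(\lambda)/\rad^{s+1} P(\lambda), I(\mu)) = \Hom_A(P(\lambda), \soc^{s+1} I(\mu))
\end{equation*}
as subspaces of $\Hom_A(P(\lambda), I(\mu))$. Both sides consist of exactly those maps $f$ whose image is annihilated by $(\rad A)^{s+1}$: the left because any such $f$ must vanish on $(\rad A)^{s+1} P(\lambda) = \rad^{s+1} P(\lambda)$, and the right because $\soc^{s+1} I(\mu)$ coincides with $\{x \in I(\mu) : (\rad A)^{s+1} x = 0\}$, a standard fact for finite-dimensional algebras.

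To finish, I would compute both dimensions using the familiar identities $\dim \Hom_A(P(\lambda), N) = [N : L(\lambda)]$ and $\dim \Hom_A(M, I(\mu)) = [M : L(\mu)]$ (the latter following from exactness of $\Hom_A(-, I(\mu))$ on a composition series, since $\soc I(\mu) = L(\mu)$). This yields
\begin{equation*}
\sum_{t=0}^{s} [\soc_t I(\mu) : L(\lambda)] = \sum_{t=0}^{s} [\rad_t P(\lambda) : L(\mu)],
\end{equation*}
which, combined with the duality identity of the first paragraph and a simple induction on $s$, gives $[\rad_s P(\mu) : L(\lambda)] = [\rad_s P(\lambda) : L(\mu)]$. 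The only genuine subtlety is the $\Hom$ identification of the second paragraph; once that is in hand, everything reduces to a routine dimension count against the duality-induced radical--socle correspondence.
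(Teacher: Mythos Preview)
The paper does not supply its own proof of this proposition; it is quoted directly from Bowman--Martin, so there is nothing to compare your argument against at the level of strategy. That said, your proof is correct and is essentially the standard one. The duality step is the only place worth tightening: when you assert that $\delta(\rad_s P(\mu))$ equals $\soc(I(\mu)/\soc^s I(\mu))$ rather than merely being contained in it, the reason is that $\rad_s P(\mu)$ is the \emph{largest} semisimple quotient of $\rad^s P(\mu)$, and a contravariant exact duality sends largest semisimple quotients to largest semisimple submodules. With that made explicit, the induction goes through cleanly, and the $\Hom$ identification and dimension count are routine as you say. Be mindful of the indexing conventions for $\rad_s$ versus $\soc_s$ (one typically starts at $0$, the other at $1$), but this is cosmetic and does not affect the argument.
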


\begin{prop}[{\cite[Corollary 7]{bowman-martin}}] \label{prop:bgg-recip}
Let $A$ be a BGG algebra with poset $\Lambda$. For weights $\lambda,\mu \in \Lambda$ we have
\begin{equation*}
[\rad_s P(\mu):\head \weyl(\lambda)]=[\rad_s \weyl(\lambda):L(\mu)]
\end{equation*}
\end{prop}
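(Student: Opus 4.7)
The plan is to package the three multiplicity statistics into matrices of one-variable polynomials and deduce the identity from uniqueness of unitriangular factorization. For each pair $\mu,\lambda\in\Lambda$ define
\[
A_{\mu\lambda}(q)=\sum_{s\ge 0}[\rad_s P(\mu):L(\lambda)]\,q^s,\quad B_{\mu\lambda}(q)=\sum_{s\ge 0}[\rad_s P(\mu):\head\weyl(\lambda)]\,q^s,
\]
\[
C_{\lambda\mu}(q)=\sum_{s\ge 0}[\rad_s \weyl(\lambda):L(\mu)]\,q^s,
\]
and view $A,B,C$ as matrices indexed by $\Lambda$. The sought equality is precisely the matrix identity $B=C^{T}$.

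First I would establish the factorization $A=BC$ by applying the Loewy-decomposition formula \eqref{eq:radresploewy} to $M=P(\mu)$. This step requires that each $P(\mu)$ admits a radical-respecting $\weyl$-filtration, and I expect this to be the main obstacle: it is not immediate from the definition of a BGG algebra and will need either to be deduced from the existence of a suitable graded cover of $A$, or to be extracted from the duality $\delta$ combined with the uniqueness assertion of Lemma \ref{lem:allradresp}. Once the factorization is available, Proposition \ref{prop:bgg-proj-recip} supplies the symmetry $A=A^{T}$, giving
\[
BC = C^{T}B^{T}.
\]

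Next I would verify triangularity. In any quasi-hereditary algebra a $\weyl$-filtration of $P(\mu)$ involves only subquotients $\weyl(\nu)$ with $\nu\ge\mu$, and $\weyl(\mu)$ sits as the unique top quotient whose head lies in radical layer $0$; with respect to any linear refinement of $\le$ this makes $B$ upper-unitriangular with $B_{\mu\mu}(q)=1$. Dually $\weyl(\lambda)$ has head $L(\lambda)$ in layer $0$ and composition factors $L(\mu)$ only for $\mu\le\lambda$, so $C$ is lower-unitriangular with $C_{\lambda\lambda}(q)=1$. In particular $B$ and $C^{T}$ are both upper-unitriangular with $1$'s on the diagonal, while $C$ and $B^{T}$ are both lower-unitriangular with $1$'s on the diagonal.

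Finally, uniqueness of unitriangular factorization closes the argument. Rewriting $BC=C^{T}B^{T}$ as $(C^{T})^{-1}B=B^{T}C^{-1}$, the left side is upper-unitriangular while the right side is lower-unitriangular, so both must equal the identity. Hence $B=C^{T}$, and comparing the coefficients of $q^{s}$ yields
\[
[\rad_s P(\mu):\head\weyl(\lambda)] = [\rad_s \weyl(\lambda):L(\mu)]
\]
as required.
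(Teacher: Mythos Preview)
The paper does not actually prove this proposition: it is quoted verbatim from \cite[Corollary~7]{bowman-martin} and no argument is given. So there is nothing in the paper to compare your proof against line by line.

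That said, your strategy is the standard one and is essentially how the result is obtained in the source you are reproducing. Packaging the three multiplicities into polynomial matrices $A,B,C$, invoking \eqref{eq:radresploewy} to get $A=BC$, using Proposition~\ref{prop:bgg-proj-recip} for $A=A^{T}$, and then appealing to uniqueness of the unitriangular (LU) factorization is exactly the intended mechanism. Your triangularity observations are correct: $B_{\mu\lambda}\neq 0$ forces $\lambda\ge\mu$ with $B_{\mu\mu}=1$, and $C_{\lambda\mu}\neq 0$ forces $\mu\le\lambda$ with $C_{\lambda\lambda}=1$, so the equation $(C^{T})^{-1}B=B^{T}C^{-1}$ does collapse to the identity.

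The one point you flag is also the one genuine issue: the factorization $A=BC$ via \eqref{eq:radresploewy} requires that each $P(\mu)$ have a \emph{radical-respecting} $\weyl$-filtration, and this is not a consequence of the BGG axioms alone. In the paper this hypothesis is precisely what is checked by hand for the generalized Schur algebras in the application (see the discussion after the projective-module diagrams and Proposition~\ref{prop:grqhalg}), rather than something deduced abstractly. So your proof is correct under that standing assumption, but you should state it as a hypothesis rather than hope to derive it from duality or from Lemma~\ref{lem:allradresp}; neither of those routes will produce it for a general BGG algebra.
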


Finally, we will use the following proposition to calculate socles of tilting modules from their characters. Its proof follows from \cite[Proposition A2.2]{donkin1998q}.

\begin{prop}
\label{prop:homdim}
Let $A$ be a quasi-hereditary algebra with poset $\Lambda$, and suppose $M$ is a module with a $\weyl$-filtration and $N$ is a module with a $\dweyl$-filtration. Then
\begin{equation*}
\dim \Hom_A(M,N)=\sum_{\lambda \in \Lambda} [M:\weyl(\lambda)][N:\dweyl(\lambda)]
\end{equation*}
\end{prop}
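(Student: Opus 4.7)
The plan is to reduce to the base case $M = \weyl(\lambda)$, $N = \dweyl(\mu)$ by a double dévissage over the given filtrations, and then invoke the standard orthogonality $\dim \Hom_A(\weyl(\lambda), \dweyl(\mu)) = \delta_{\lambda,\mu}$. The crucial input is the $\Ext^1$-vanishing noted just after the tilting theorem: if $N$ has a $\dweyl$-filtration then $\Ext^1_A(\weyl(\lambda), N) = 0$ for every $\lambda$, and dually $\Ext^1_A(M, \dweyl(\mu)) = 0$ whenever $M$ has a $\weyl$-filtration.

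First I would fix $N$ and induct on the length $n$ of a $\weyl$-filtration $0 = M_0 < M_1 < \dotsb < M_n = M$ with $M_k/M_{k-1} \iso \weyl(\lambda_k)$. Applying $\Hom_A(-, N)$ to $0 \to M_{k-1} \to M_k \to \weyl(\lambda_k) \to 0$ produces a long exact sequence, and the vanishing $\Ext^1_A(\weyl(\lambda_k), N) = 0$ collapses it to a short exact sequence
\begin{equation*}
0 \to \Hom_A(\weyl(\lambda_k), N) \to \Hom_A(M_k, N) \to \Hom_A(M_{k-1}, N) \to 0.
\end{equation*}
Therefore $\dim \Hom_A(-, N)$ is additive along the $\weyl$-filtration, and by induction
\begin{equation*}
\dim \Hom_A(M, N) = \sum_{\lambda \in \Lambda} [M : \weyl(\lambda)] \cdot \dim \Hom_A(\weyl(\lambda), N).
\end{equation*}
Running the symmetric induction on the length of a $\dweyl$-filtration of $N$, using $\Ext^1_A(\weyl(\lambda), \dweyl(\mu)) = 0$ to split the corresponding long exact sequences, refines this to
\begin{equation*}
\dim \Hom_A(\weyl(\lambda), N) = \sum_{\mu \in \Lambda} [N : \dweyl(\mu)] \cdot \dim \Hom_A(\weyl(\lambda), \dweyl(\mu)).
\end{equation*}

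For the base case, any nonzero $\varphi: \weyl(\lambda) \to \dweyl(\mu)$ factors as $\weyl(\lambda) \twoheadrightarrow \im \varphi \hookrightarrow \dweyl(\mu)$; since $\head \weyl(\lambda) \iso L(\lambda)$ and $\soc \dweyl(\mu) \iso L(\mu)$, while every other composition factor of $\weyl(\lambda)$ is strictly less than $\lambda$ and every other composition factor of $\dweyl(\mu)$ is strictly less than $\mu$, the image must be $L(\lambda) = L(\mu)$ and hence $\lambda = \mu$. Composition with the evaluation map gives an element of $\End_A \weyl(\lambda) \iso K$, so the space is at most one-dimensional, and equality at $\lambda = \mu$ is given by the canonical projection-inclusion $\weyl(\lambda) \twoheadrightarrow L(\lambda) \hookrightarrow \dweyl(\lambda)$. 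Substituting this Kronecker delta into the two additivity formulas yields the claimed identity.

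The only real obstacle is the $\Ext^1$-vanishing, and that has already been quoted from the tilting theory literature in the paragraph following the tilting theorem above; everything else is a standard induction on filtration length combined with the well-known fact that $\weyl$- and $\dweyl$-multiplicities are independent of the choice of filtration (itself a consequence of the same formula applied to suitable projective/injective auxiliaries, so there is no circularity).
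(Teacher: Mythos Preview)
Your argument is correct and is the standard proof of this fact. The paper itself does not give a proof; it simply cites \cite[Proposition A2.2]{donkin1998q}, and your d\'evissage argument is essentially what one finds there (or in any standard treatment of quasi-hereditary algebras).

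One small remark on your citation of the $\Ext^1$-vanishing: the sentence following the tilting theorem in the paper states the implication in the other direction (``$M$ has a $\dweyl$-filtration \emph{if} $\Ext^1(\weyl(\lambda),M)=0$ for all $\lambda$''), whereas you need the converse. This is harmless, since the converse is the more elementary direction: the base case $\Ext^1_A(\weyl(\lambda),\dweyl(\mu))=0$ is part of the foundational package for quasi-hereditary algebras, and the general case follows by the same induction on a $\dweyl$-filtration of $N$ that you already perform. Your final paragraph acknowledges this, so there is no gap.
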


\section{Restricted tilting modules for $SL_4(K)$}

\subsection{Notation}

Our main source on representations of algebraic groups is \cite[II.1-7]{jantzen}. Let $G=SL_4(K)$, where $K$ is an algebraically closed field of characteristic $p>0$. For a dominant weight $\lambda$ let $\weyl(\lambda)$ be the Weyl module of highest weight $\lambda$, $\dweyl(\lambda)$ its contravariant dual, and $L(\lambda)$ the simple head of $\weyl(\lambda)$. For any finite saturated set $\pi$ of dominant weights, the full subcategory of rational $G$-modules whose composition factors are indexed by weights in $\pi$ is equivalent to a module category $\modcat{S(\pi)}$, where $S(\pi)$ is a finite-dimensional algebra called a generalized Schur algebra \cite{donkin}. The algebra $S(\pi)$ is quasi-hereditary (in fact a BGG algebra) with standard and costandard modules $\weyl(\lambda)$ and $\dweyl(\lambda)$ respectively. When necessary we will deal with $S(\pi)$-modules instead of rational $G$-modules for a sufficiently large set $\pi$.

We fix a notation for the weights. The root system corresponding to $SL_4(K)$ is $A_3$. Let $\alpha_1,\alpha_2,\alpha_3$ be the simple roots (with $\langle \alpha_1,\alpha_3^\vee\rangle=0$), and let $\omega_1,\omega_2,\omega_3$ be the corresponding fundamental weights, which span the weight lattice $X$ of $A_3$. We will use the notation $(\lambda_1,\lambda_2,\lambda_3) \in \ZZ^3$ to refer to the weight $\lambda_1 \omega_1+\lambda_2\omega_2+\lambda_3\omega_3$. In this notation, we have $\alpha_1=(2,-1,0)$, $\alpha_2=(-1,2,-1)$, and $\alpha_3=(0,-1,2)$. The set of dominant weights is therefore $X^+=\{(\lambda_1,\lambda_2,\lambda_3) \mid \lambda_1,\lambda_2,\lambda_3 \geq 0\}$, which can be given a partial order via the dominance ordering.

Recall that the affine Weyl group $W_p=W \rtimes pX$ acts on the vector space $X \otimes_{\ZZ} \RR$ via the dot action, which can be divided into simplicial fundamental regions called alcoves. There are $6$ alcoves in the restricted region $X_1$, which we label $C_i$ for $i$ one of $1$, $2$, $3$, $3'$, $4$, or $5$ (see Figure \ref{fig:alcoves}). The two alcoves $3$ and $3'$ are related `by symmetry' in a similar fashion to the $SL_3$ case. In addition, there are alcoves adjacent to (or ``flanking'') $3$ and $3'$ called $\fl$ and $\fl'$. The generators of $W_p$ are denoted $s_0,s_1,s_2,s_3$ where $s_i$ is the reflection in $\alpha_i$ and $s_0$ is the reflection in the upper wall of alcove $1$.
\begin{figure}[h]
\label{fig:alcoves}
\begin{equation*}
\xymatrix{
& C_5 \ar@{-}[d]^{s_2} \ar@{--}[dl] \ar@{--}[dr] & \\
C_{\fl} \ar@{-}[d]_{s_2} & C_4 \ar@{-}[dl]^{s_3} \ar@{-}[dr]_{s_1} & C_{\fl'} \ar@{-}[d]^{s_2} \\
C_3 \ar@{-}[dr]_{s_1} & & C_{3'} \ar@{-}[dl]^{s_3} \\
& C_2 \ar@{-}[d]^{s_0} & \\
& C_1 &  
}
\end{equation*}
\caption{The dominance lattice for the labelled alcoves. Solid lines indicate adjacent alcoves, with walls labelled using the $W_p$-generators. Dashed lines indicate dominance without adjacency.}
\end{figure}
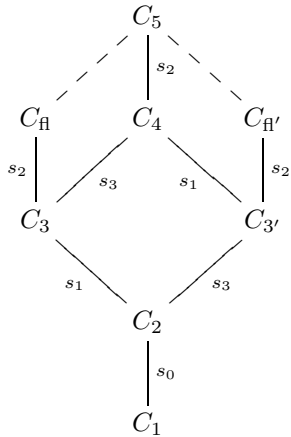

The linkage principle for algebraic groups states that if $L(\lambda)$ and $L(\lambda')$ are in the same block, then $\lambda' \in W_p \cdot \lambda$. If $V$ is a rational $G$-module, let $\pr_\lambda(V)$ denote the summand of $V$ whose composition factors have highest weights in $W_p \cdot \lambda$, and write $\mathcal{B}_\lambda$ for the full subcategory of modules such that $\pr_\lambda(V)=V$. For a dominant alcove $C$ and $\lambda,\mu \in \overline{C}$ the translation functor is defined by
\begin{equation*}
T_\lambda^\mu(V)=\pr_\mu\left(\pr_\lambda(V) \otimes L(w(\mu-\lambda))\right)
\end{equation*}
where $w \in W$ is chosen so that $w(\mu-\lambda) \in X^+$. Note that $T_\lambda^\mu$ is always exact as the composition of several exact functors. The translation principle states that $T_\lambda^\mu,T_\mu^\lambda:\mathcal{B}_\lambda \leftrightarrows \mathcal{B}_\mu$ are adjoint and mutually inverse if $\lambda$ and $\mu$ belong to the same set of alcoves. Therefore we can use alcove notation and write $L(1)$, $\weyl(1)$, etc.~when discussing general module structure without referring to specific weights.

Suppose $\lambda,\lambda' \in X^+$ belong to adjacent alcoves $C,C'$ with $\lambda<\lambda'$. Let $\mu$ be a weight on the wall between them, labelled by $s \in W$. The wall-crossing functor is defined to be $\theta_s=T_\mu^{\lambda'} \circ T_\lambda^\mu$, which is self-adjoint and exact. It is well-known that $\theta_s \weyl(\lambda)\iso \theta_s \weyl(\lambda')$, and we have the exact sequence
\begin{equation}
0 \rightarrow \weyl(\lambda') \rightarrow \theta_s \weyl(\lambda) \rightarrow \weyl(\lambda) \rightarrow 0 \label{eq:wall-crossing}
\end{equation}
We will use this exact sequence to calculate the character of $\theta_s(M)$ from the character of $M$.

Throughout this section we will use the notation $[L_0,L_1\dotsc,L_s]$ to depict the structure of the unique uniserial module $M$ with composition factors $L_0,\dotsc,L_s$ such that $\rad_i M \iso L_i$.

\subsection{The result}

From \cite{humphreys1976ordinary}, the character formulae of the labelled simple modules for type $A_3$ in terms of Weyl characters are fixed for $p$ sufficiently large. Alternatively, this fact can be viewed as a consequence of Lusztig's character formula for algebraic groups. We list these character formulae below.
\begin{align*}
[\weyl(1)]& =[L(1)] \\
[\weyl(2)]& =[L(2)]+[L(1)] \\
[\weyl(3)]& =[L(3)]+[L(2)] \\
[\weyl(\fl)]& =[L(\fl)]+[L(3)] \\
[\weyl(4)]& =[L(4)]+[L(3)]+[L(3')]+[L(2)]+[L(1)] \\
[\weyl(5)]& =[L(5)]+[L(4)]+[L(\fl)]+[L(\fl')]+[L(3)]+[L(3')]+[L(2)]
\end{align*}

Our goal in this section is to prove the following theorem.

\begin{thm}
\label{thm:A_3-general}

The regular restricted tilting modules for $G$ are all rigid. They have the following Loewy series and partial structure:
\begin{align*}
T(1)& =[1], & T(2)& =[1,2,1], \\
T(3)& =\begin{minipage}{34mm}
\def\objectstyle{\scriptstyle}\xymatrix@=6pt{
& 2 \ar@{-}[dl] \ar@{-}[dr] &  \\
3 \ar@{-}[dr] & & 1 \ar@{-}[dl] \\
& 2 & }
\end{minipage}, & T(\fl)& =\begin{minipage}{34mm}
\def\objectstyle{\scriptstyle}\xymatrix@=6pt{
& 3 \ar@{-}[dl] \ar@{-}[dr] &  \\
\fl \ar@{-}[dr] & & 2 \ar@{-}[dl] \\
& 3 & }
\end{minipage}, \\
T(4)& =\begin{minipage}{34mm}
\def\objectstyle{\scriptstyle}\xymatrix@=6pt{
& 2 \ar@{-}[d] & \\
3 \ar@{-}[d] & 1 & 3' \ar@{-}[d] \\
2 & 4 \ar@{-}[dr] \ar@{-}[d] \ar@{-}[dl] & 2 \\
3 \ar@{-}[dr] & 1 \ar@{-}[d] & 3' \ar@{-}[dl] \\
& 2 &}
\end{minipage}, & T(5)& =\begin{minipage}{34mm}
\def\objectstyle{\scriptstyle}\xymatrix@=6pt{
& & 3 \ar@{-}[d] & 3' \ar@{-}[d] & & \\
\fl' \ar@{-}[d] & \fl \ar@{-}[d] & 2 & 2 & 4 \ar@{-}[dr] \ar@{-}[d] \ar@{-}[dl] &  \\
3' & 3 & 5 \ar@{-}[dll] \ar@{-}[dl] \ar@{-}[d] \ar@{-}[dr] & 3 \ar@{-}[dr] & 1 \ar@{-}[d] & 3' \ar@{-}[dl] \\
\fl \ar@{-}[dr] & 2 \ar@{-}[d] \ar@{-}[dr] & 4 \ar@{-}[d] \ar@{-}[dl] & \fl' \ar@{-}[dl] & 2 & \\
& 3 & 3' & & & }
 \end{minipage}
\end{align*}
\end{thm}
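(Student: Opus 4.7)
The plan is to verify case by case that each of the six restricted tilting modules satisfies the hypotheses of Corollary \ref{cor:rigidtiltdual}, from which rigidity is immediate since the relevant generalized Schur algebra $S(\pi)$ is a BGG algebra. For each $\lambda$ I need to exhibit a radical-respecting $\weyl$-filtration on $T(\lambda)$ and rule out stretched $\weyl$-$L$ subquotients; Corollary \ref{cor:greestiltdual} will then show that $\grees T(\lambda)$ is a tilting module for $\grees S(\pi)$, and Corollary \ref{cor:rigidtiltdual} will deliver rigidity and the claimed Loewy series simultaneously.

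As a preparatory step I would verify that $\grees S(\pi)$ is itself quasi-hereditary. By Proposition \ref{prop:grqhalg} this reduces to checking that each projective $P(\mu)$ in the principal block has a radical-respecting $\weyl$-filtration. The radical layers of $P(\mu)$ are pinned down by the Bowman--Martin reciprocity formulas (Propositions \ref{prop:bgg-proj-recip} and \ref{prop:bgg-recip}) once the composition factor multiplicities and radical series of each $\weyl(\nu)$ are known, and these in turn follow from the listed character formulas together with the rigidity of regular restricted Weyl modules. Lemma \ref{lem:allradresp} then forces the radical-respecting property once a single $\weyl$-filtration is seen to be compatible with the radical layering.

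For each $T(\lambda)$ the workflow is as follows. First, determine the character of $T(\lambda)$ by starting from $T(1)=L(1)$ and inductively realizing $T(\lambda)$ as the indecomposable summand of highest weight $\lambda$ in $\theta_s T(\mu)$ for a wall $s$ separating $\mu$ from $\lambda$; the short exact sequence \eqref{eq:wall-crossing} together with Proposition \ref{prop:homdim} suffices to separate summands and read off $\weyl$-multiplicities. Second, establish the radical-respecting $\weyl$-filtration: for $\lambda\in\{1,2,3,\fl,4\}$ the head of $T(\lambda)$ turns out to be simple (equal to the socle by BGG duality), so the argument following Corollary \ref{cor:greestiltdual} supplies the filtration by realizing $T(\lambda)$ as a quotient of $P(\head T(\lambda))$. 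Third, using Proposition \ref{prop:bgg-recip} and the known Loewy structures of the $\weyl(\nu)$, assemble a coefficient quiver matching the depicted layers. Fourth, apply Lemma \ref{lem:stretched-repeated-factors} (and its socle analogue) to each candidate pair $(\nu,\mu)$ with $\mu>\nu$ in the quiver to rule out stretched $\weyl$-$L$ subquotients.

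The small cases $T(1),T(2),T(3),T(\fl)$ are essentially immediate from their short $\weyl$-filtrations and the length-two structure of the corresponding Weyl modules. The main obstacle is $T(5)$: its head is $L(3)\oplus L(3')$ rather than a single simple, so the quotient-of-$P(\lambda)$ shortcut no longer applies, and the radical-respecting $\weyl$-filtration must instead be established directly, for instance by realizing $T(5)$ as the indecomposable summand of weight $5$ in $\theta_{s_2}T(4)$ and transporting the radical-respecting filtration through the wall-crossing functor. Moreover, with roughly two dozen composition factors and many candidate stretched configurations --- extensions of quotients of $\weyl(\fl),\weyl(\fl')$ by $L(5)$, of quotients of $\weyl(3),\weyl(3')$ by $L(4)$ or $L(5)$, and further cases involving $\weyl(2)$ and $\weyl(1)$ --- the bookkeeping required to invoke Lemma \ref{lem:stretched-repeated-factors} on every candidate is where I expect the bulk of the work to lie.
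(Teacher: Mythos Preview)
Your overall strategy matches the paper's: verify $\grees S(\pi)$ is quasi-hereditary via Proposition~\ref{prop:grqhalg}, then for each $T(\lambda)$ obtain a radical-respecting $\weyl$-filtration and rule out stretched $\weyl$-$L$ subquotients so that Corollaries~\ref{cor:greestiltdual} and~\ref{cor:rigidtiltdual} apply. The small cases and $T(4)$ proceed essentially as you describe.

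The gap is in your handling of $T(5)$. You assert that because $\head T(5)\cong L(3)\oplus L(3')$ is not simple, ``the quotient-of-$P(\lambda)$ shortcut no longer applies,'' and propose instead to transport a radical-respecting filtration through the wall-crossing functor $\theta_{s_2}$. But wall-crossing functors, while exact, have no reason to respect radical layers, so this step is unjustified as stated. In fact the shortcut \emph{does} still apply: since $\head T(5)\cong L(3)\oplus L(3')$, the module $T(5)$ is a quotient of $P(3)\oplus P(3')$ by a submodule with a $\weyl$-filtration, and the argument following Corollary~\ref{cor:greestiltdual} goes through verbatim with $P(3)\oplus P(3')$ in place of a single indecomposable projective. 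This is exactly what the paper does.

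This simplification also collapses your anticipated ``two dozen composition factors and many candidate stretched configurations'' down to essentially one case (plus its symmetric counterpart): working inside $P(3)\oplus P(3')$, whose Loewy structure is already known and has length~$5$, the only possible stretched $\weyl$-$L$ subquotient involves $L(2)$ in layer~$1$ and an $L(3)$ (resp.\ $L(3')$) in the bottom layer. A single application of Lemma~\ref{lem:stretched-repeated-factors}, together with the known structures of $T(\fl)$ and of $\weyl(\fl)$ as a submodule of $P_\pi(3)$, disposes of it.
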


The remainder of this section is devoted to the proof of this theorem.

\subsection{Weyl modules}

First we calculate the structure of the Weyl modules. We claim that the labelled Weyl modules have the following structure.
\begin{align*}
\weyl(1)& =[1], & \weyl(2)& =[2,1], & \weyl(3)& =[3,2], \\
\weyl(\fl)& =[\fl,3], & \weyl(4)& =\begin{minipage}{34mm}
\def\objectstyle{\scriptstyle}\xymatrix@=6pt{
& 4 \ar@{-}[dl] \ar@{-}[d] \ar@{-}[dr] &	\\
3 \ar@{-}[dr] & 1 \ar@{-}[d] & 3' \ar@{-}[dl] \\
& 2 & }
\end{minipage}, & \weyl(5)& =\begin{minipage}{34mm}
\def\objectstyle{\scriptstyle}\xymatrix@=6pt{
& & 5 \ar@{-}[dll] \ar@{-}[dl] \ar@{-}[dr] \ar@{-}[drr] & & \\
\fl \ar@{-}[dr] & 2 \ar@{-}[d] \ar@{-}[drr] & & 4 \ar@{-}[d] \ar@{-}[dll] & \fl' \ar@{-}[dl] \\
& 3 & & 3' & }
\end{minipage}
\end{align*}
The cases for $1,2,3,\fl$ are obvious from the character formulae. We proceed to cases $4$ and $5$.


If $L$ is a simple $G$-module, then from \eqref{eq:wall-crossing} we have
\begin{equation*}
\Hom_G(L,\weyl(4)) \leq \Hom_G(L,\theta_{s_3}\weyl(3)) \iso \Hom_G(\theta_{s_3}(L),\weyl(3))
\end{equation*}
and similarly for $\theta_{s_1}(L)$ and $\weyl(3')$. As $\theta_{s_3}L(1)$, $\theta_{s_3}L(3')$, and $\theta_{s_1}L(3)$ are all $0$, we must have $\soc \weyl(4)=L(2)$. The Lusztig character formula imposes a parity condition on the vanishing of the $\Ext^1$-groups, namely, $\Ext^1(L(\lambda),L(\mu))=0$ if the parity between $\lambda$ and $\mu$ (as measured by the length of a Weyl group element $w$ which sends $\lambda$ to $\mu$) is even \cite{scott1994quasihereditary}. As the remaining composition factors $L(3)$, $L(3')$, and $L(1)$ have the same parity, the structure of $\weyl(4)$ must be the one depicted above.

Similarly, for $L$ a simple $G$-module we have
\begin{equation*}
\Hom_G(L,\weyl(5))\leq \Hom_G(L,\theta_{s_2}\weyl(4)) \iso \Hom_G(\theta_{s_2}L,\weyl(4))
\end{equation*}
As $\theta_{s_2}L(\fl)$, $\theta_{s_2}L(\fl')$, and $\theta_{s_2}L(2)$ are all $0$ they cannot be summands of $\soc \weyl(5)$. From \eqref{eq:wall-crossing} we calculate
\begin{align*}
[\theta_{s_2}L(3)]& =[\theta_{s_2}\weyl(3)]-[\theta_{s_2}L(2)] \\
& =[\weyl(\fl)]+[\weyl(3)] \\
& =[L(\fl)]+2[L(3)]+[L(2)] \\
[\theta_{s_2}L(3')]& =[L(\fl')]+2[L(3')]+[L(2)] \\
[\theta_{s_2}L(4)]& =[\theta_{s_2}\weyl(4)]-[\theta_{s_2}L(3)]-[\theta_{s_2}L(3')]-[\theta_{s_2}L(2)]-[\theta_{s_2}L(1)] \\
& =[\weyl(5)]+[\weyl(4)]-[\theta_{s_2}L(3)]-[\theta_{s_2}L(3')] \\
& =[L(5)]+2[L(4)]+[L(1)]
\end{align*}
By considering the structure of $\weyl(4)$, $L(4)$ also is not contained in $\soc \weyl(5)$. So $\soc \weyl(5)$ contains at least one of $L(3)$ and $L(3')$, but by symmetry if it contains one it contains both, so $\soc \weyl(5)=L(3) \oplus L(3')$. Again, the remaining composition factors have the same parity so $\weyl(5)$ must have the structure depicted above.

\subsection{Projective modules}

The Loewy series and partial structures of the projective modules now follows using Propositions \ref{prop:bgg-proj-recip} and \ref{prop:bgg-recip}.
\begin{align*}
P(1)& =\begin{minipage}{34mm}
\def\objectstyle{\scriptstyle}\xymatrix@=8pt{
& 1 & & \\
2 \ar@{-} [d] & & 4 \ar@{-}[dl] \ar@{-}[d] \ar@{-}[dr] & \\
1 & 3 \ar@{-}[dr] & 1 \ar@{-}[d] & 3' \ar@{-}[dl] \\
& & 2 & }
\end{minipage}, & P(2)& =\begin{minipage}{34mm}
\def\objectstyle{\scriptstyle}\xymatrix@=8pt{
& & 2 \ar@{-}[d] & & & & \\
& 3 \ar@{-}[dl] & 1 & 5 \ar@{-}[dll] \ar@{-}[dl] \ar@{-}[d] \ar@{-}[dr] & 3' \ar@{-}[dr] & &  \\
2 & \fl \ar@{-}[dr] & 2 \ar@{-}[d] \ar@{-}[dr] & 4 \ar@{-}[d] \ar@{-}[dl] & \fl' \ar@{-}[dl] & 2 & 4 \ar@{-}[dll] \ar@{-}[dl] \ar@{-}[d] \\
& & 3 & 3' & 3 \ar@{-}[d] & 1 \ar@{-}[dl] & 3' \ar@{-}[dll] \\
& & & & 2 & &}
\end{minipage}, \\
P(3)& =\begin{minipage}{34mm}
\def\objectstyle{\scriptstyle}\xymatrix@=8pt{
& & 3 \ar@{-}[d] & & \\
& 4 \ar@{-}[dl] \ar@{-}[d] \ar@{-}[dr] & 2 & \fl \ar@{-}[dr] & \\
3 \ar@{-}[d] & 1 \ar@{-}[dl] & 3' \ar@{-}[dll] & 5 \ar@{-}[dll] \ar@{-}[dl] \ar@{-}[d] \ar@{-}[dr] & 3 \\
2 & \fl \ar@{-}[dr] & 2 \ar@{-}[d] \ar@{-}[dr] & 4 \ar@{-}[d] \ar@{-}[dl] & \fl' \ar@{-}[dl] \\
& & 3 & 3' & }
\end{minipage}, & P(\fl)& =\begin{minipage}{34mm}
\def\objectstyle{\scriptstyle}\xymatrix@=8pt{
& \fl \ar@{-}[d] & &  \\
& 3 & 5 \ar@{-}[dll] \ar@{-}[dl] \ar@{-}[d] \ar@{-}[dr] & \\
\fl \ar@{-}[dr] & 2 \ar@{-}[d] \ar@{-}[dr] & 4 \ar@{-}[d] \ar@{-}[dl] & \fl' \ar@{-}[dl] \\
& 3 & 3' & }
 \end{minipage}, \\
P(4)& =\begin{minipage}{34mm}
\def\objectstyle{\scriptstyle}\xymatrix@=8pt{
& 4 \ar@{-}[dl] \ar@{-}[d] \ar@{-}[dr] & & & \\
3 \ar@{-}[d] & 1 \ar@{-}[dl] & 3' \ar@{-}[dll] & 5 \ar@{-}[dll] \ar@{-}[dl] \ar@{-}[d] \ar@{-}[dr] & \\
2 & \fl \ar@{-}[dr] & 2 \ar@{-}[d] \ar@{-}[dr] & 4 \ar@{-}[d] \ar@{-}[dl] & \fl' \ar@{-}[dl] \\
& & 3 & 3' & }
\end{minipage}, & P(5)& =\begin{minipage}{34mm}
\def\objectstyle{\scriptstyle}\xymatrix@=8pt{
& & 5 \ar@{-}[dll] \ar@{-}[dl] \ar@{-}[dr] \ar@{-}[drr] & & \\
\fl \ar@{-}[dr] & 2 \ar@{-}[d] \ar@{-}[drr] & & 4 \ar@{-}[d] \ar@{-}[dll] & \fl' \ar@{-}[dl] \\
& 3 & & 3' & }
\end{minipage}
\end{align*}

It should be noted that Proposition \ref{prop:bgg-recip} only specifies where the heads of Weyl modules are located in the Loewy series. Any other composition factor in a Weyl subquotient must be located at least as far down in the radical series relative to the head of the subquotient as in the Weyl module itself. If none of the composition factors appear any further down, then \eqref{eq:radresploewy} holds for the Loewy series and the projectives have radical-respecting $\weyl$-filtrations, so $\grees{A}$ is a quasi-hereditary algebra by Proposition \ref{prop:grqhalg}.

There are several ways to show that \eqref{eq:radresploewy} holds. First of all, many possibilities can be ruled out using parity. For example, consider $P(1)$ and the factors $L(1)$, $L(3)$, and $L(3')$ inside $\weyl(4)$. These factors cannot occur any lower down the radical series, for this would require a connection (i.e.~a non-zero $\Ext^1$) between the $L(1)$ in $\weyl(2)$ and one of these modules, which is impossible by parity.

Secondly, we can use the fact that the projectives of the Schur algebra corresponding to a saturated subset of the weights are quotients of the projectives above. For example, consider $P(1)$ and the factor $L(1)$ inside $\weyl(2)$. We know that the projective cover of $L(1)$ for the Schur algebra corresponding to the weight set $\{1,2\}$ is a quotient of $P(1)$ by $\weyl(4)$. Therefore $\weyl(4)$ must be a submodule of $P(1)$, so in particular $L(1)$ cannot occur lower down in the radical series. This shows that $P(1)$ has the depicted Loewy series.

Finally, we can use Proposition \ref{prop:bgg-proj-recip} for any other cases which remain. For example, consider $P(2)$ and the factor $L(2)$ inside $\weyl(5)$. If $L(2)$ is lower down in the radical series, then it must be in the $4$th layer by parity. This would push $L(3)$ and $L(3')$ down to the $5$th layer, so $[\rad_5 P(2):L(3)]>0$. This implies that $[\rad_5 P(3):L(2)]>0$. But this is impossible (for the reasons above). Thus $L(2)$ (and similarly $L(4)$, $L(\fl)$, and $L(\fl')$) are actually in the $3$rd layer as depicted above.

\subsection{Tilting modules}

Now we proceed to prove the rigidity of the labelled tilting modules. Since all the weights we are dealing with are in the lowest $p^2$-alcove, we can calculate the characters of these tilting modules using a result of Soergel \cite{soergel-KL,soergel-KM}. The tilting characters and the known Weyl module structures give the socles of the tilting modules using Proposition \ref{prop:homdim}. In fact for all the labelled tilting modules we have $\soc T(\lambda)=\soc \weyl(\lambda)$.

Obviously $T(1)=[1]$, and $T(2)$ is $P_\pi(1)$ for $\pi=\{1,2\}$. If $\soc T(3) \iso \soc \weyl(3) \iso L(2)$ then $\head T(3) \iso L(2)$, so $T(3)$ is a quotient of $P_\pi(2)$ for $\pi=\{1,2,3\}$. The only quotient which possibly contains $\weyl(3)$ as a submodule is all of $P_\pi(2)$, and in order for it to have a $\dweyl$-filtration there must be a connection between the $L(2)$ in $\weyl(3)$ and the $L(1)$ in $\weyl(2)$. The case for $T(\fl)$ is similar.

The case for $T(4)$ is more complicated. Assuming $\soc T(4) \iso \soc \weyl(4) \iso L(2)$ we must have $T(4)$ as a quotient of $P_\pi(2)$, where $\pi=\{1,2,3,3',4\}$. As $P_\pi(2)$ has a radical-respecting $\weyl$-filtration, $T(4)$ also has one, so we can apply Corollaries \ref{cor:greestiltdual} and \ref{cor:rigidtiltdual} if we can show $P_\pi(4)$ (and therefore $T(4)$) has no stretched $\weyl$-$L$ subquotients. The only possible stretched $\weyl$-$L$ subquotient is between the $L(1)$ in $\weyl(2)$ and the $L(2)$ in $\weyl(4)$. By Lemma \ref{lem:stretched-repeated-factors} this can only happen if there is no connection between this copy of $L(1)$ and $L(4)$. But in that case, $P_\pi(4)$ would not have a quotient isomorphic to $\dweyl(4)$, which must be the case using the structure of $\dweyl(4)$ and Proposition \ref{prop:homdim}. Thus $T(4)$ is rigid, so it must in fact be all of $P_\pi(4)$.

Now assume $\soc T(5) \iso \soc \weyl(5) \iso L(3) \oplus L(3')$. Thus $T(5)$ is a quotient of $P(3) \oplus P(3')$. The only possible stretched $\weyl$-$L$ subquotient in $P(3) \oplus P(3')$ is between a copy of $L(2)$ in radical layer $1$ and $L(3)$ in the bottom radical layer (or the symmetric counterpart between $L(2)$ and $L(3')$). First, if $L(3)$ inside $\weyl(\fl)$ does not connect downwards to anything, then $\soc(P(3) \oplus P(3'))$ is too large, and any quotient which eliminates this socle does not have a quotient isomorphic to a submodule of $\dweyl(5)$. Similarly the $L(2)$ inside $\weyl(4)$ must connect downwards to some factor.

We know that $L(2)$ is connected to this $L(3)$ by the structure of $T(\fl)$. Thus we are in the situation of Lemma \ref{lem:stretched-repeated-factors}. The only other copy of $L(2)$ is not attached to this copy of $L(3)$. Thus $L(2)$ must also connect to the $L(3)$ inside $\weyl(4)$, which connects downwards to another $L(2)$. But we know that the first copy of $L(3)$ doesn't attach to this $L(2)$, because $\weyl(\fl)$ is a submodule of $P_\pi(3)$ for $\pi=\{1,2,3,3',4,\fl\}$. Thus we do not have a stretched subquotient. This shows that $T(5)$ must be rigid, and so it must have the Loewy series given above as $P(3) \oplus P(3')$ doesn't have any other non-trivial rigid quotients.

\bibliographystyle{hplain}
\bibliography{rigidtiltingrefs_article}
\end{document}